\documentclass{article}
\usepackage{graphicx} 
\usepackage{subcaption}
\usepackage{amsmath}
\usepackage{amssymb}
\usepackage{amsthm}
\usepackage{xcolor}
\usepackage{tabularx}
\usepackage{geometry}
\usepackage{hyperref}
\usepackage{doi}

\usepackage{authblk}

\usepackage[most]{tcolorbox}

\usepackage{float}

\usepackage{url} 

\usepackage{tikz}

\newtheorem{problem}{Problem}

\newcommand{\real}{\mathbb{R}}
\newcommand{\order}{\mathcal{O}}
\newcommand{\lc}{\mathfrak{lc}}
\newcommand{\cc}{\mathfrak{cc}}
\newcommand{\stab}{\mathcal{S}}
\newcommand{\Rinf}{R_\infty}

\newcommand*\circled[1]{\tikz[baseline=(char.base)]{
            \node[shape=circle,draw,inner sep=2pt] (char) {#1};}}

\newtheorem{thm}{Theorem}[section]

\newtheorem{cor}[thm]{Corollary}
\newtheorem{prop}[thm]{Proposition}
\newtheorem{dfn}[thm]{Definition}
\newtheorem{rem}[thm]{Remark}

\newtheorem{ex}[thm]{Example}
\newtheorem{algo}[thm]{Algorithm}

\title{Optimized linear multistep methods for stiff problems}

\author[1]{Tyler Jensen\thanks{Corresponding author: tsj55605@gmail.com}}
\author[2]{Lajos L\'oczi\thanks{Lajos.Loczi@kaust.edu.sa, ORCID: 0000-0002-7999-5658}}
\author[3]{David I.~Ketcheson\thanks{David.Ketcheson@kaust.edu.sa, ORCID: 0000-0002-1212-126X}}

\affil[1]{Department of Physics and Astronomy, Brigham Young University, Provo, United States}
\affil[2,3]{CEMSE Division, King Abdullah University of Science and Technology (KAUST), Thuwal, Saudi Arabia}

\begin{document}

\maketitle

\begin{abstract}We design and test several new optimized 
linear multistep methods for stiff problems.  Each method is A-stable, implicit, and second-order accurate.  We focus on achieving the smallest possible error constant while also ensuring that stiff modes are damped.  
We recover some known optimal L-stable three- and four-step methods, and provide new and improved five-step methods.
We also develop new methods that are not
L-stable but provide a prescribed rate of damping in the infinitely stiff limit.  Variable-step-size formulae are also provided for some of the new methods.
Numerical tests demonstrate that the improved error constants lead to improved accuracy.
\end{abstract}

\noindent \textbf{Keywords:} stiff ordinary differential equations, A-stable linear multistep methods, optimal error constants \\

\noindent \textbf{Mathematics Subject Classification:} 65L06, 65L04, 65L20, 65L05

\subsection*{Funding Declaration}

This work was supported by funding from King Abdullah University of Science and Technology (KAUST), including support through the Visiting Student Research Program (VSRP).

\section{Background and motivation}
The efficient solution of stiff ordinary differential equations (ODEs)
typically employs the use of A-stable and L-stable numerical methods.  The effectiveness of such methods is demonstrated by, for example, the very widespread use of the 2-step $2^\text{nd}$-order A-stable and L-stable backward difference formula (BDF2)
\begin{equation}\label{BDF2LMMformula}
        y_{n+2}-\frac{4}{3}y_{n+1}+\frac{1}{3}y_{n}=\frac{2h}{3}f(y_{n+2}).
\end{equation}
Unfortunately, as stated in Dahlquist's second barrier \cite[Section V.1]{wanner1996solving}, no linear multistep method (LMM) can be A-stable and higher than $2^\text{nd}$-order accurate.  Thus,  the applicability of BDF methods of order $p\ge 3$ is more limited.
One alternative is the use of multistage (e.g., Runge--Kutta) methods, and these are indeed commonly utilized. 
However, for stiff problems, they can suffer from order reduction and they may require the solution of very large systems of algebraic equations. Consequently, in this work we focus on A-stable (hence necessarily implicit) LMMs of order $p=2$. 


It is worth asking
whether one can improve on the BDF2 method while retaining
A-stability. Let us first consider the \textit{error constant} of an LMM, which measures the leading term of its local truncation error; henceforth $|C|$ denotes the modulus of the error constant.
As Dahlquist showed, the A-stable trapezoidal method 
\begin{equation}\label{trapezoidalmethod}
    y_{n+1}-y_n=\frac{h}{2}\big(f(y_{n+1})+f(y_n)\big)
\end{equation}
has the smallest possible error constant for A-stable $2^\text{nd}$-order
LMMs with $|C|_\text{trap}=1/12$, being four times smaller than that of BDF2 with $|C|_\text{BDF2}=1/3$.
However, the trapezoidal method is not L-stable, and therefore is not effective for solving problems with highly stiff modes.  We refer to the rate at which an LMM damps infinitely-stiff modes as the \textit{asymptotic damping factor} (see Definition \ref{def:dampingfactoratinfinity}), denoted by $\Rinf$. 
The trapezoidal method has $\Rinf=1$ (no damping); an A-stable method with $\Rinf=0$
is said to be L-stable.  

Perhaps for historical reasons, in existing papers and texts, definitions of L-stability require A-stability as a pre-requisite (see e.g., \cite[Section IV.3]{wanner1996solving} or \cite[Section 8.3.2]{leveque2007finite}).  However, the condition $\Rinf=0$ is useful in many situations even if the method is not A-stable but only A($\alpha$)-stable \cite[Section V.2]{wanner1996solving} for some $\alpha\in[0,\pi/2)$.
More generally, such strong damping may not be necessary and it can be sufficient to have a damping factor significantly less than unity.
The notion of some finite amount of damping for stiff modes was introduced and studied by Jeltsch \cite{jeltsch1977multistep}.  Here, rather than the \emph{order} of damping at infinity we focus on the actual rate of damping, with the notion of $L(\kappa)$-stability to indicate that stiff modes are damped by a factor of $\kappa$ at each step
(i.e., $\Rinf\le \kappa$; see Section \ref{sec:stabdef}).


The main goal of the present work is to find  A-stable LMMs of order $p=2$ with $2\le k\le5$ steps that have (i) the error constant $|C|$ and (ii) the asymptotic damping factor $\Rinf$ as small as possible. As we will see, these are conflicting optimization criteria: one has to trade  $|C|$ for $\Rinf$---by slightly relaxing the L-stability constraint, one can make $|C|$ smaller, and vice versa.

In Section \ref{sec:notation}, we introduce some notation and recall certain recursive Schur--Cohn-type theorems to bound the zeros of polynomials  with respect to (closed or open) disks in the complex plane. 
In Section \ref{sec:error-constant}, we review the local consistency conditions and the definition of the error constant of LMMs.  
 In Section \ref{sec:stabdef}, we recall and define the stability 
 concepts encountered in later sections. 

Of the above stability concepts, the most difficult one to verify is A-stability. In Section \ref{sec:A-stability}, we present an algorithm to determine whether an LMM is A-stable.  Our result does not rely solely on the {root locus curve} of the LMM, which would provide insufficient information. 
Since the algorithm is not specific to the methods constructed here, it may also be useful for proving A-stability of other LMMs.
In Section \ref{sec:L-stability}, we describe algorithms to certify 
L$(\kappa)$-stability (for $\kappa\in[0,1]$) and strict L$(\kappa)$-stability (for $\kappa\in(0,1]$).
In Section \ref{sec:optimization}, we formulate the 
optimization problem that will be solved in Sections \ref{sec:bdflike}--\ref{sec:Lkappa}.



In Section \ref{sec:bdflike}, we focus on LMMs that are A-stable and L$(0)$-stable. The value of L$(0)$-stability lies in the fact that 
very stiff modes are damped very rapidly. By using symbolic techniques, we determine the unique methods with the smallest $|C|$ in this class for $3\le k\le 5$ steps, thus introducing the BDFL$k_2$ methods, see  Table \ref{tab:bdfl_345}; the BDFL$2_2$ method would be identical to the classical BDF2 method. (One can think of ``L'' in the acronym BDFL as ``BDF-like'' but also as ``L-stable''.) We remark that---due to the non-trivial nature of the optimization problem---for the BDFL$5_2$ method we could not prove optimality, but we have shown that its  error constant $|C|$ differs by at most $10^{-6}$ from the theoretical  optimum. Figure \ref{fig:err_to_step} highlights that the optimal error constant $|C|$ gets closer and closer to the theoretical minimum $|C|_\text{trap}=1/12$ as we choose L$(0)$-stable LMMs with larger step numbers $k$. 

\begin{rem}
    Vatsa et al.~\cite{vatsa2010re} investigated LMMs obtained by taking linear combinations of the coefficients of BDF methods of different orders, in order to reduce the error constant. Here, we take a more general approach to the same problem, by considering more general classes of LMMs.
In the special cases corresponding to method classes studied in \cite[Section III]{vatsa2010re}, we recover their 
 optimal methods BDF2OPT(4) and BDF2OPT(5) as our BDFL$3_2$ and BDFL$4_2$ methods.
We remark that the optimal method coefficients have not been displayed explicitly in \cite{vatsa2010re}.
\end{rem}

In Section \ref{sec:bdflike}, 
we studied methods with $\Rinf=0$.  In practice, good results can be
obtained even for many 
stiff problems 
 with LMMs that have a prescribed amount of damping $0<\Rinf<1$, 
since this also leads to the elimination of stiff oscillations
within a few steps. In Section \ref{sec:Lkappa}, we turn to the investigation of such methods. 
In Section \ref{sec:k=2strictlyLkappamethods}, we explicitly determine the unique optimal $2$-step $2^\text{nd}$-order A-stable and L$(\kappa)$-stable method for any given $\Rinf=\kappa\in(0,1)$. In Section \ref{sec:k=3someLkappamethods}, we study three restricted families of $3$-step $2^\text{nd}$-order A-stable and L$(\kappa)$-stable 
methods---the exploration of the full space of 3-step LMMs would be computationally demanding. Despite the fact that the explicit determination of the set of A-stable methods in these families is not always possible (only a larger set in the parameter space that contains all A-stable methods), or the minimization algorithms do not always yield answers in a reasonable amount of time, we can bypass these technical difficulties and still obtain the optimal error constants and the corresponding LMMs in the three families for any $\kappa\in(0,1)$. Some LMMs to be used for the numerical tests are presented in Table \ref{tab:lk_1234}.
The trade-off between $|C|$ and $\Rinf$ is compactly illustrated by
Figure \ref{fig:lk_errmin}, summarizing the theoretical findings of Section \ref{sec:Lkappa}. 

Finally, in Section \ref{sec:tests}, we study the behavior
of some optimal schemes through numerical tests. Motivated by these results, in Section \ref{sec:vss} we develop a proof-of-concept variable-step-size implementation of some of our new methods, and test that implementation numerically.

The main contributions of the present paper are the following:
\begin{itemize}
    \item development of a $2^\text{nd}$-order A-stable and L-stable five-step LMM with (essentially) optimal error constant ---the BDFL$5_2$ method (see Table \ref{tab:bdfl_345});
    \item certification of the global optimality of existing L-stable three- and four-step methods (see Table \ref{tab:bdfl_345});
    \item development of some $2^\text{nd}$-order A-stable and L$(\kappa)$-stable methods with $\kappa\in(0,1)$, $k\in\{2,3\}$ steps, and with optimal error constants (see Table \ref{tab:lk_1234});
    \item variable-step-size implementations of the optimal BDFL$k_2$ methods (see Section \ref{subsec:robertson}  and Appendix \ref{varstepbdfl45coeffs}).
\end{itemize}
Additionally, we provide a practical symbolic verification and optimization framework, implemented in the Wolfram Language \textit{Mathematica} version 14 and including
\begin{itemize}
    \item Algorithm \ref{Astabilitydeterminationalg} 
    to 
    rigorously check A-stability of a given LMM or a parametric family of LMMs;
    \item Algorithms \ref{vonNeumannalgorithm} and \ref{Schuralgorithm} to check (strict) L($\kappa$)-stability of LMMs---these algorithms test whether the roots of a univariate polynomial are included in the closed or open disks of radius $\kappa>0$ centered at the origin of the complex plane.
\end{itemize}

\subsection{Notation and preliminaries}\label{sec:notation}

We assume $0\in\mathbb{N}$.  The 
real part
and the complex conjugate are denoted by $\text{Re}(z)$
and $\bar{z}$ ($z\in\mathbb{C}$), and the closed left and right half-planes of $\mathbb{C}$ by 
$\{\text{Re}\le 0\}$ and $\{\text{Re}\ge 0\}$, respectively.
For $r>0$, $\mathbb{D}_r\subset\mathbb{C}$ denotes the open disk of radius centered at the origin. The boundary and closure of a set $H\subset\mathbb{C}$ are $\partial H$ and $\overline{H}$. The logical \textit{or} and logical \textit{and} symbols will sometimes be denoted by $\lor$ and $\land$ when displaying some formulae produced by  \textit{Mathematica}.

To simplify the terminology, when discussing the error constant $C\in\mathbb{R}$ of an LMM (to be defined in Section \ref{sec:error-constant}), by the \textit{smallest error constant} we actually mean the smallest value of the \textit{modulus} of the error constant.

The degree, leading coefficient, and constant coefficient of a univariate polynomial $Q$ are denoted by $\deg(Q)$, $\lc(Q)$, and $\cc(Q)$ respectively. As usual, the symbols $Q\equiv 0$ and $Q\not\equiv 0$ mean that $Q$ vanishes identically and its negation, respectively. 
 A \textit{dominant root} of $Q$  is a root with the largest modulus. The \textit{reduced polynomial} of  $Q(z)=\sum_{j=0}^{n}\gamma_j z^j$ ($z\in\mathbb{C}$, $\gamma_j\in\mathbb{C}$, $n\in\mathbb{N}^+$) is defined by
\begin{equation}\label{reducedpolydef}
Q^\textbf{r}(z):=\sum^n_{j=1}\left(\overline{\gamma_n}\gamma_j-\gamma_0\overline{\gamma_{n-j}}\right)z^{j-1}.
\end{equation}
Notice that $\deg(Q^\textbf{r})\le \deg(Q)-1$. 
Univariate polynomials whose roots lie in $\mathbb{D}_1$ are often called \textit{Schur} polynomials. A polynomial is \textit{von Neumann}, if its roots are located in $\overline{\mathbb{D}_1}$. 
A univariate polynomial $Q$ satisfies the \textit{root condition} \cite[Section III.3]{hairer1993solving} if its roots lie in $\overline{\mathbb{D}_1}$, and possible roots on $\partial\mathbb{D}_1$ have multiplicity $1$.
A recursive form of the classical Schur--Cohn test is reproduced, for example, in \cite{loczi2020optimal}.
\begin{thm}\label{thm:SCreduction}
    Let $Q$ be a univariate polynomial with $\deg(Q)\ge 2$ and $\lc(Q)\ne 0\ne \cc(Q)$.\\ Then
 $Q$ is a Schur polynomial if and only if
\[
 |\lc(Q)|>|\cc(Q)| \text{\ \ and\ \ } Q^\mathbf{r} \text{ is a Schur polynomial.}
\]
Moreover, $Q$ is a von Neumann polynomial if and only if
\[
\text{either } \Big( |\lc(Q)|>|\cc(Q)| \text{\ \ and\ \ } Q^\mathbf{r} \text{ is a von Neumann polynomial\,}\Big)
\]
\[
\text{or } \Big( Q^\mathbf{r}\equiv 0 \text{ and } Q' \text{ is a von Neumann polynomial\,}\Big).
\]
Furthermore, $Q$ satisfies the root condition if and only if 
\[
\text{either } \Big( |\lc(Q)|>|\cc(Q)| \text{\ \ and\ \ } Q^\mathbf{r} \text{ satisfies the root condition\,}\Big)
\]
\[
\text{or } \Big( Q^\mathbf{r}\equiv 0 \text{ and } Q' \text{ is a Schur polynomial\,}\Big).
\]
\end{thm}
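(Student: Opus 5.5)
We would prove all three statements with one tool, the reciprocal polynomial $Q^*(z):=z^n\overline{Q(1/\bar z)}=\sum_{j=0}^n\overline{\gamma_{n-j}}z^j$, combined with Rouché's theorem. A direct expansion shows
\[
z\,Q^\mathbf{r}(z)=\overline{\gamma_n}\,Q(z)-\gamma_0\,Q^*(z),
\]
because the constant terms cancel. Moreover $|Q^*(z)|=|Q(z)|$ on $|z|=1$.

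\textbf{Schur case.} Suppose first that $|\gamma_n|>|\gamma_0|$. On $\partial\mathbb{D}_1$ we have $|\gamma_0Q^*|=|\gamma_0||Q|<|\gamma_n||Q|$ whenever $Q(z)\neq0$. If $Q$ has no roots on the unit circle, Rouché's theorem then says that $zQ^\mathbf{r}$ and $\overline{\gamma_n}Q$ have the same number of zeros in $\mathbb{D}_1$. If $Q$ does vanish at some unimodular $z_0$, then $Q^*(z_0)=0$ too, so $Q^\mathbf{r}(z_0)=0$. Hence, counting multiplicities, the zeros of $Q$ in $\mathbb{D}_1$, on $\partial\mathbb{D}_1$, and outside $\overline{\mathbb{D}_1}$ correspond to those of $zQ^\mathbf{r}$. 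The extra factor $z$ accounts for one interior zero, and $\deg Q^\mathbf{r}=n-1$ since $\lc(Q^\mathbf{r})=|\gamma_n|^2-|\gamma_0|^2\neq0$. To make the boundary case rigorous, we would apply Rouché's theorem to $\overline{\gamma_n}Q-t\gamma_0Q^*$ for $t\in[0,1]$, or perturb the radius to $1\pm\varepsilon$. We would also check that common unimodular zeros of $Q$ and $Q^*$ carry the same multiplicity in $Q^\mathbf{r}$, using the identity $Q^*=\overline{\gamma_n}\cdots$ evaluated along the circle. The conclusion is the following transfer principle:
\begin{quote}
when $|\lc(Q)|>|\cc(Q)|$, $Q$ has all $n$ roots in $\mathbb{D}_1$ (resp.\ $\overline{\mathbb{D}_1}$; resp.\ in $\overline{\mathbb{D}_1}$ with simple unimodular ones) if and only if $Q^\mathbf{r}$ has all $n-1$ roots there.
\end{quote}

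Conversely, suppose $|\gamma_n|\le|\gamma_0|$. The product of the roots of $Q$ has modulus $|\gamma_0/\gamma_n|\ge1$, so $Q$ cannot be Schur. This proves the first statement.

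\textbf{Von Neumann and root-condition cases.} For these, the case $|\gamma_n|<|\gamma_0|$ is excluded in the same way. Swapping the roles of $Q$ and $Q^*$ shows that $Q$ would then have a root outside $\overline{\mathbb{D}_1}$, because the product of its roots has modulus greater than $1$.

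It remains to handle $|\gamma_n|=|\gamma_0|$. If $Q$ is von Neumann, the product of the root moduli equals $1$ while every modulus is at most $1$, so all roots are unimodular. Then $Q^*$ is a constant multiple of $Q$, which forces $Q^\mathbf{r}\equiv0$. Conversely, $Q^\mathbf{r}\equiv0$ with $\gamma_0\ne0$ means $Q^*=cQ$ with $|c|=1$, so $Q$ is self-inversive and its roots are symmetric under $z\mapsto1/\bar z$. In that situation $|\gamma_n|>|\gamma_0|$ fails, and we can show the converse of the case split as well: if $|\gamma_n|=|\gamma_0|$ but $Q^\mathbf{r}\not\equiv0$, then $Q$ is not von Neumann. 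This uses the same product argument, since von Neumann together with equal moduli forces all roots onto the circle and hence $Q^\mathbf{r}\equiv0$.

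For self-inversive $Q$ we invoke Cohn's theorem: $Q$ has all its roots on the unit circle if and only if $Q'$ has all its roots in $\overline{\mathbb{D}_1}$. Its proof rests on the identity $nQ=zQ'+\overline{\cdot}\,z^{n-1}\overline{Q'(1/\bar z)}$ and Rouché, or on Gauss--Lucas together with symmetry. The refinement we need is that the unimodular roots of $Q$ are all simple if and only if $Q'$ is Schur. The direction from simple roots to $Q'$ Schur follows because a unimodular zero of $Q'$ would, via the self-inversive identity, be a zero of $Q$ as well, and hence a multiple root. This gives the root-condition statement.

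\textbf{Main obstacle.} The delicate step is the Cohn theorem for self-inversive polynomials, including its multiplicity version, together with the bookkeeping of unimodular roots in the transfer principle. We would establish Cohn's theorem via the identity $nQ(z)=zQ'(z)+\omega\,(Q')^*(z)$ for self-inversive $Q$, with $\omega$ unimodular, applying Rouché's argument on circles of radius $1\pm\varepsilon$.
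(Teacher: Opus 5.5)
The paper gives no proof of this theorem. It quotes it as a recursive form of the classical Schur--Cohn test, with a reference. Your argument is the standard proof of that test, and it is correct in substance:
\begin{itemize}
\item the identity $zQ^{\mathbf r}=\overline{\gamma_n}Q-\gamma_0Q^*$ with Rouch\'e's theorem when $|\gamma_n|>|\gamma_0|$;
\item the product of the roots when $|\gamma_n|\le|\gamma_0|$;
\item Cohn's theorem for the self-inversive case $Q^{\mathbf r}\equiv0$.
\end{itemize}
However, some steps are only announced, and one is misstated.

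\textbf{Unimodular zeros in the transfer principle.} The bookkeeping of unimodular zeros is the real content of the von Neumann and root-condition cases. Your sentence about ``$Q^*=\overline{\gamma_n}\cdots$ evaluated along the circle'' does not supply it. The fact you need is local. If $|z_0|=1$ and $Q=(z-z_0)^mR$ with $R(z_0)\neq0$, then $Q^*=(-\overline{z_0})^m(z-z_0)^mR^*$. Hence
\[
zQ^{\mathbf r}=(z-z_0)^m\left(\overline{\gamma_n}R-(-\overline{z_0})^m\gamma_0R^*\right).
\]
The bracket is nonzero at $z_0$, because $|R^*(z_0)|=|R(z_0)|\neq0$ and $|\gamma_n|>|\gamma_0|$. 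So $z_0$ has the same multiplicity in $Q$ and in $Q^{\mathbf r}$.

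Now divide $Q$, $Q^*$ and $zQ^{\mathbf r}$ by the full unimodular factor $G$ of $Q$; note that $G^*$ is a unimodular constant times $G$. The quotient has no zeros on $\partial\mathbb{D}_1$, so your Rouch\'e argument applies to it verbatim. Then $\deg(zQ^{\mathbf r})=n$ matches the exterior counts.

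Both devices you mention ultimately rest on this factorization. The homotopy $\overline{\gamma_n}Q-t\gamma_0Q^*$ has zeros on the circle for every $t$. On $|z|=1\pm\varepsilon$ the relation $|Q^*|=|Q|$ is not available.

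\textbf{Forcing $Q^{\mathbf r}\equiv0$.} When all roots are unimodular and $Q^*=cQ$, you need one more line: $zQ^{\mathbf r}=(\overline{\gamma_n}-c\gamma_0)Q$, which vanishes at $0$ while $Q(0)=\gamma_0\neq0$.

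\textbf{The ``refinement'' is misstated.} As stated, it is false for general self-inversive $Q$. For example, $Q(z)=z^2-\frac52z+1$ is self-inversive with no unimodular roots, yet $Q'$ vanishes at $\frac54$. The correct statement is: for self-inversive $Q$, all roots lie on $\partial\mathbb{D}_1$ and are simple if and only if $Q'$ is Schur. This is what you actually need, and what your argument proves when combined with Cohn's theorem and Gauss--Lucas.

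\textbf{Cohn's theorem needs no Rouch\'e.} Write $Q'=a\prod_j(z-w_j)$ with $|w_j|\le1$. The identity $|1-\overline{w}z|^2-|z-w|^2=(1-|z|^2)(1-|w|^2)$ gives $|(Q')^*(z)|>|zQ'(z)|$ for all $|z|<1$. Hence $nQ=zQ'+\omega(Q')^*$ has no zeros in $\mathbb{D}_1$. The symmetry $z\mapsto1/\overline{z}$ of the roots of $Q$ then rules out zeros outside $\overline{\mathbb{D}_1}$.
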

\noindent The following elementary scaling property will be useful.
\begin{prop}\label{prop:kappadisk}
    Let $Q$ be a univariate polynomial with $\deg(Q)\ge 1$ and fix some $\kappa>0$. \\ Then
    \[
   \text{the roots of } Q \text{ are located in } \mathbb{D}_\kappa \ (\text{or in }\overline{\mathbb{D}_\kappa})
   \]
   \[
\Updownarrow 
\]
   \[\text{the polynomial } z\mapsto Q(\kappa z) \text{ is Schur } (\text{or von Neumann}).
\]
\end{prop}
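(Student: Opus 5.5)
The plan is to relate the roots of $Q$ and of the rescaled polynomial $\widetilde Q(z):=Q(\kappa z)$ through the bijection $z\mapsto \kappa z$ of $\mathbb{C}$. First, $\widetilde Q$ is again a univariate polynomial of the same degree as $Q$. Indeed, if $Q(z)=\sum_{j=0}^n\gamma_j z^j$ with $\gamma_n\ne 0$ and $n\ge 1$, then $\widetilde Q(z)=\sum_{j=0}^n\gamma_j\kappa^j z^j$. Its leading coefficient is $\gamma_n\kappa^n\ne 0$ because $\kappa>0$. So $\deg(\widetilde Q)=\deg(Q)\ge 1$, and the notions ``Schur'' and ``von Neumann'' apply to $\widetilde Q$ just as they do to $Q$. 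This also rules out the degenerate case where $\widetilde Q$ is constant or identically zero, which would make the root-location statements vacuous.

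The key step is the identity of root sets
\[
\{w\in\mathbb{C}:\widetilde Q(w)=0\}=\{z/\kappa : z\in\mathbb{C},\ Q(z)=0\}.
\]
It follows directly from $\widetilde Q(w)=0 \iff Q(\kappa w)=0$, setting $z=\kappa w$. Multiplicities are preserved as well, since $\widetilde Q^{(m)}(w)=\kappa^m Q^{(m)}(\kappa w)$. We do not need multiplicities for the Schur and von Neumann properties, but the same argument would give the root-condition analogue.

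Next, $|z/\kappa|<1 \iff |z|<\kappa$ and $|z/\kappa|\le 1\iff |z|\le\kappa$, again because $\kappa>0$. Hence all roots of $\widetilde Q$ lie in $\mathbb{D}_1$ (respectively $\overline{\mathbb{D}_1}$) if and only if all roots of $Q$ lie in $\mathbb{D}_\kappa$ (respectively $\overline{\mathbb{D}_\kappa}$). By definition, this is exactly the statement that $\widetilde Q$ is Schur (respectively von Neumann) if and only if the roots of $Q$ lie in the open (respectively closed) disk of radius $\kappa$.

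There is no real obstacle. The only point needing care is the degree bookkeeping in the first step, namely that the rescaling keeps the leading coefficient nonzero so that $\widetilde Q$ has the same number of roots as $Q$. This is what allows Theorem \ref{thm:SCreduction} to be applied afterwards to $\widetilde Q$ with the same degree hypotheses, which is presumably how Algorithms \ref{vonNeumannalgorithm} and \ref{Schuralgorithm} will use this proposition.
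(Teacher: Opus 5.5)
Your argument is correct. Rescaling keeps the degree, since the leading coefficient becomes $\gamma_n\kappa^n\neq 0$, and the roots of $Q(\kappa\,\cdot)$ are exactly the roots of $Q$ divided by $\kappa$, so $\mathbb{D}_1$ and $\overline{\mathbb{D}_1}$ correspond to $\mathbb{D}_\kappa$ and $\overline{\mathbb{D}_\kappa}$. The paper gives no proof, calling this an elementary scaling property, and your root-set identity is precisely the intended reasoning.
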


\section{A-stable LMMs with small error constant and damping factor} \label{sec:setup}
Since any ODE system can be written as an equivalent autonomous ODE system, we consider the initial-value ODE
\begin{align}
    y'(t) & = f(y(t)) &  y(0) & = y_0, \label{ODE}
\end{align}
where $f: \real^m \to \real^m$ is assumed to be Lipschitz continuous and smooth on some appropriate domain.
We will solve \eqref{ODE} numerically, obtaining
a sequence of numerical approximations $y_n \approx y(nh)$ where $h>0$ is the step size.
We write an LMM of $k\in\mathbb{N}^+$ steps as 
\begin{align} \label{LMM}
    \sum_{j=0}^k \alpha_j y_{n+j} & = h \sum_{j=0}^k \beta_j f(y_{n+j})\quad\quad\quad(n\in\mathbb{N})
\end{align}
with $\alpha_k\ne 0$ (guaranteeing  solvability) and $|\alpha_0|+|\beta_0|\ne 0$ (ensuring that the order of the recursion is not less than $k$). The coefficient lists 
$(\alpha_0,\alpha_1,\ldots,\alpha_k)$ and $(\beta_0,\beta_1,\ldots,\beta_k)$
will often be referred to as the $\alpha$- and $\beta$-\textit{coefficients} of the LMM (note the increasing ordering). From now on, we
use the normalization \begin{equation}\label{normalizationassumption}
    \alpha_k = 1.
\end{equation}

An LMM is \textit{explicit} if $\beta_k=0$, or \textit{implicit} if $\beta_k\ne 0$; these classes will be abbreviated as ELMMs and ILMMs, respectively.

Recall the \textit{stability polynomial}  $\pi(\,\cdot\,; z)$ of the LMM, which depends on the parameter $z\in\mathbb{C}$ and is given by \cite[Section 7.3]{leveque2007finite}
\begin{align} \label{stabfun}
    \pi(\zeta; z) & := \rho(\zeta) - z \sigma(\zeta) \quad\quad\quad (\zeta\in\mathbb{C}),
\end{align}
where
\begin{equation}
    \label{charpolys}
    \rho(\zeta)  := \sum_{j=0}^k \alpha_j \zeta^j \quad\quad\quad\text{ and }\quad\quad\quad  \sigma(\zeta)  := \sum_{j=0}^k \beta_j \zeta^j
\end{equation} 
are the \textit{first} and \textit{second characteristic polynomials} of \eqref{LMM}, respectively.
\begin{dfn}[{\cite[Section III.2]{wanner1996solving}}]
    An LMM \eqref{LMM} is irreducible if the polynomials $\rho$ and $\sigma$ have no common root; otherwise, it is reducible.
\end{dfn}
\noindent Since a reducible $k$-step LMM is essentially equivalent to a $\widetilde{k}$-step LMM with $\widetilde{k}<k$, from now on, we assume
\begin{equation}\label{irredasusumption}
    \rho \text{ and } \sigma \text{ have no common root.}
\end{equation}

\subsection{Local order of consistency and the error constant} \label{sec:error-constant}
The local truncation error \cite[Section 5.9.1]{leveque2007finite} for the LMM \eqref{LMM} is   
\begin{align} \label{localtruncationerror}
    \text{LTE} & = \frac{1}{h} \left(\sum_{j=0}^k \alpha_j\right) y(t_n) + \sum_{p=1}^\infty h^{p-1} \sum_{j=0}^k \left( \frac{j^p}{p!}\alpha_j - \frac{j^{p-1}}{(p-1)!}\beta_j \right)y^{(p)}(t_n).
\end{align}
For a method of $2^\text{nd}$-order accuracy ($\text{LTE}=\order(h^2)$) we therefore require
\begin{subequations} \label{consistency}
\begin{align}
    \sum_{j=0}^k \alpha_j & = 0 \label{consistencyA} \\
    \sum_{j=0}^k \left(j \alpha_j - \beta_j\right) & = 0 \\
    \sum_{j=0}^k \left(\frac{1}{2}j^2 \alpha_j - j \beta_j\right) & = 0.
\end{align}
\end{subequations}
We say a method is \emph{consistent} if it satisfies the first two conditions of \eqref{consistency}.
One might be inclined to define the \emph{error constant} of the method as simply the coefficient of the first non-vanishing term in \eqref{localtruncationerror}.
A more appropriate estimate of the error can be obtained through a judicious normalization of this residual, see the discussion in \cite[Section III.2]{hairer1993solving}.

\begin{dfn}\label{def:errorconstant}
The error constant $C\in\mathbb{R}$ of the LMM \eqref{LMM} of order $p\in\mathbb{N}^+$ is
\[    C:=\frac{C_{p+1}}{\sigma\left(1\right)},
\]
where   
\[
C_{p+1}:=\sum^k_{j=0}\left( \frac{j^{p+1}}{(p+1)!}
 \alpha_j- \frac{j^p}{p!}\beta_j\right)
\]
and $\sigma$ is the second characteristic polynomial  \eqref{charpolys} of the LMM.
\end{dfn}
\begin{rem}\label{rem:sigma1notzero}
    It can be shown that consistency and zero-stability together imply $\sigma(1)\ne 0$,
    so the error constant is always well-defined.
\end{rem}


\subsection{Some stability concepts} \label{sec:stabdef}

 Some classical stability concepts \cite[Section III.3]{hairer1993solving},  \cite[Section V.1]{wanner1996solving} are reviewed next.

\begin{dfn}\label{def:0-stability}
    An LMM \eqref{LMM} is zero-stable if the first characteristic  polynomial $\rho$ satisfies the root condition.
\end{dfn}
\begin{dfn}\label{def:stabilityregion}
    The region of absolute stability $\stab$ of an LMM \eqref{LMM} is 
    \[
    \stab:=\{z\in\mathbb{C}\ :\ \deg(\pi(\,\cdot\,;z))=k\text{\ \ and\ \  } \pi(\,\cdot\,;z)\text{ satisfies the root condition}\},
    \]
    where $\pi$ is the stability polynomial \eqref{stabfun}.
\end{dfn}
\begin{dfn}\label{def:A-stability}
    An LMM \eqref{LMM} is A-stable if\ \,$\{\mathrm{Re}\le0\}\subseteq\stab$.
\end{dfn}
\begin{rem} Zero-stability is equivalent to $0\in\stab$, and A-stability implies zero-stability. 
\end{rem}
 In analogy with the stability function of a Runge--Kutta method---describing the behavior of the method on a scalar linear Dahlquist test problem and often denoted by $R(z)$ for $z\in\mathbb{C}$, see \cite[Section IV.3]{wanner1996solving}---we define the \textit{damping factor} of an LMM.
\begin{dfn}\label{def:dampingfactor}
     Given an LMM \eqref{LMM} and some $z\in\mathbb{C}$, the damping factor $R(z)$ is the modulus of the dominant root of the polynomial $\pi(\cdot; z)$. 
\end{dfn}

\begin{prop}\label{LMMdampingfactorprop} For a given LMM \eqref{LMM}, we have the following.
\begin{itemize}
    \item If the method is explicit and consistent, then its damping factor $R(z)$ becomes unbounded as $z\to\infty$ (here, and in what follows, $\infty$ denotes complex infinity). 
    \item If the method is implicit, then the limit $\lim_{z\to\infty}R(z)$ exists and is equal to the modulus of the dominant root of the second characteristic polynomial $\sigma$. 
\end{itemize} 
\end{prop}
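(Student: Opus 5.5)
The plan is to study the roots of $\pi(\zeta;z)=\rho(\zeta)-z\sigma(\zeta)$ as $z\to\infty$. The tools are elementary facts about how polynomial roots depend continuously on the coefficients. The two bullets are handled separately.

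\emph{Explicit case.} Here $\beta_k=0$, so the coefficient of $\zeta^k$ in $\pi(\cdot;z)$ is $\alpha_k=1$ and $\pi(\cdot;z)$ is monic of degree $k$. By Vieta's formulas, the product of its roots equals $(-1)^k(\alpha_0-z\beta_0)$. I will use consistency to rule out $\sigma\equiv 0$: the second condition in \eqref{consistency} gives $\sigma(1)=\rho'(1)$, and irreducibility \eqref{irredasusumption} forbids $\sigma\equiv0$. So some $\beta_j\neq 0$ with $j<k$. If $\beta_0\ne0$, the product of the roots has modulus $|\alpha_0-z\beta_0|\to\infty$, so the dominant root satisfies $R(z)\ge |\alpha_0-z\beta_0|^{1/k}\to\infty$.

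In general, let $m<k$ be the largest index with $\beta_m\neq0$. Dividing $\pi(\zeta;z)$ by $z$ gives $\zeta^k/z-\sigma(\zeta)+\rho(\zeta)/z$. Suppose, for contradiction, that all roots stayed bounded along some sequence $z\to\infty$. Write $\pi(\zeta;z)=\prod(\zeta-\zeta_i(z))$. Then every coefficient of $\pi$ would be bounded, since each is a symmetric function of bounded roots. But the coefficient of $\zeta^m$ is $\alpha_m-z\beta_m$, which is unbounded. This contradiction shows $R(z)\to\infty$. This bounded-coefficient argument covers both subcases at once, so I will present only it.

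\emph{Implicit case.} Here $\beta_k\ne0$, so $\deg\sigma=k$. For $z\ne0$, the roots of $\pi(\cdot;z)$ coincide with the roots of $\sigma(\zeta)-w\rho(\zeta)$ with $w=1/z$. As $w\to0$, this is a degree-$k$ polynomial with leading coefficient $\beta_k-w\to\beta_k\neq0$, and its coefficients depend continuously on $w$. By continuity of roots (Hurwitz's theorem, or continuity of the unordered root multiset of polynomials of fixed degree with nonvanishing leading coefficient), its $k$ roots converge as a multiset to the roots of $\sigma$. The maximum modulus is a continuous function of the unordered root multiset, so $R(z)\to$ the modulus of the dominant root of $\sigma$.

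The main obstacle is justifying continuity of the roots rigorously in the implicit case. I plan to simply cite the standard continuity theorem, which follows from Rouch\'e's theorem applied on small circles around each distinct root of $\sigma$, together with the observation that the degree stays $k$, so no roots escape to infinity.
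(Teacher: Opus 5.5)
Your proposal is correct and follows the paper's proof. In the explicit case, an unbounded coefficient $\alpha_m - z\beta_m$ of the monic polynomial $\pi(\cdot;z)$ forces some root to blow up. You justify $\beta_m\neq 0$ via the standing irreducibility assumption instead of the paper's appeal to convergence, which fits the stated hypotheses more directly since zero-stability is not assumed. In the implicit case you pass to $\sigma - w\rho$ with $w=1/z$ and use continuity of roots, exactly as the paper does. One slip: your displayed expression for $\pi/z$ double-counts $\zeta^k/z$ (it should read $\rho(\zeta)/z-\sigma(\zeta)$), but that line is never used and can simply be deleted.
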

\begin{proof}
  The stability polynomial \eqref{stabfun} of an ELMM takes the form
  $  \pi(\zeta;z)=\alpha_k \zeta^k+ \sum_{j=0}^{k-1} (\alpha_j -z  \beta_j )\zeta^j$, 
  and there is an index $0\le j_0\le k-1$ for which $\beta_{j_0}\ne 0$ (otherwise, $\sigma(1)=0$ would contradict the convergence of the LMM, see Remark \ref{rem:sigma1notzero}). This means that the coefficient of $\zeta^{j_0}$ becomes unbounded as $z\to\infty$. So, due to Vieta's formulae, a certain elementary symmetric polynomial of the roots of $\pi(\cdot;z)$ becomes unbounded as $z\to\infty$. But this can only happen if at least one root becomes unbounded, hence the modulus of the dominant root of $\pi(\cdot;z)$, $R(z)$ becomes unbounded.
  
  Let us consider an ILMM. For any $z\ne 0$, set $\widetilde{z}:=1/z$. Then the roots of equations
   $\pi(\cdot;z)=0$ and $-\widetilde{z} \,\pi(\cdot;z)=0$ coincide. But 
   $-\widetilde{z} \,\pi(\cdot;z)= \sigma(\cdot)-\widetilde{z}\rho(\cdot)
   $, and the roots of a polynomial are continuous functions of its coefficients, so the modulus of the dominant root of  $\pi(\cdot;z)$ converges to the modulus of the dominant root of $\sigma$ as $\widetilde{z}\to 0$.
\end{proof}
\noindent Since we are not interested in LMMs whose damping factor can become unbounded, from now on we assume
\begin{equation}\label{implicitasusumption}
\beta_k \ne 0.
\end{equation}

 Based on Proposition \ref{LMMdampingfactorprop}, we define the \textit{asymptotic damping factor} as follows.
\begin{dfn}\label{def:dampingfactoratinfinity}
     Given an ILMM \eqref{LMM}, its damping factor at $\infty$ is
     \[
     \Rinf:=\lim_{z\to\infty}R(z) = \text{ the modulus of the dominant root of\ \,}  \sigma.
     \]
\end{dfn}
 \noindent Next, we recall the concept of $L$-stability.
\begin{dfn}
    An ILMM \eqref{LMM} is L-stable, if it is A-stable and $\Rinf=0$. 
\end{dfn}
\noindent We generalize the above concept as follows---notice that A-stability is not assumed below.

\begin{dfn}\label{Lkappadef}
    Let $\kappa\in [0,+\infty)$ be given. An 
    ILMM \eqref{LMM}  is said to be L$(\kappa)$-stable, if the asymptotic damping factor satisfies $\Rinf\le \kappa$.
\end{dfn}
\begin{rem}
    Note that this is quite different from the concept of L$(\theta)$-stability introduced in
    \cite{Pagano2026}.
\end{rem}
\begin{dfn}\label{strictLkappadef}
    Let $\kappa\in (0,+\infty)$ be given. An 
    ILMM \eqref{LMM}  is said to be strictly L$(\kappa)$-stable, if the asymptotic damping factor satisfies $\Rinf< \kappa$.
\end{dfn}
\noindent It is clear  that for an ILMM \eqref{LMM} we have the following:
\begin{equation}\label{Lkappacharacterization}
\text{ L}(\kappa)\text{-stability }\Longleftrightarrow \text{ the roots of } \sigma \text{ are in }\overline{\mathbb{D}_\kappa};
\end{equation}
\[
 \text{ strict L}(\kappa)\text{-stability }\Longleftrightarrow \text{ the roots of } \sigma \text{ are in }{\mathbb{D}_\kappa};
\]
\[
 \text{ L-stability } \Longleftrightarrow \text{ A-stability and L}(0)\text{-stability}.
\]

\begin{rem}
    In addition to the above, an LMM is defined (see \cite[Sections V.1 and V.7]{wanner1996solving}) to be stable at $\infty$ if $\sigma$ satisfies the root condition, and 
    strictly stable at $\infty$ if $\sigma$ is a Schur polynomial. In our terminology,
    \[
    \text{ strict stability at }\infty \Longleftrightarrow \text{ strict L}(1)\text{-stability}.
    \]    
\end{rem}


%
%
%

\subsection{Characterization of A-stable methods}
\label{sec:A-stability}

Checking A-stability of a given LMM is a non-trivial task, since it is a non-local property. To investigate A-stability or to visualize the region of absolute stability $\stab$ of an LMM, the following concept is often introduced \cite[Section 7.6.1]{leveque2007finite}.
\begin{dfn} \label{def:rootlocuscurve}
    The root-locus curve (RLC, or boundary-locus curve) of an LMM \eqref{LMM} is defined as the image of $\partial\mathbb{D}_1$ under the map ${\rho}/{\sigma}$. A parametrization of the RLC is given by
    \[
    [0,2\pi]\ni\theta\mapsto \frac{\rho(e^{i\theta})}{\sigma(e^{i\theta})}\in\mathbb{C}.
    \]
\end{dfn}
\noindent However, it is important to keep in mind that, in general, the boundary of the region of absolute stability, $\partial \stab$, is \textit{not} the same as the RLC. It should also be realized that the following ``implication''
\[
\text{``RLC} \subseteq \{\mathrm{Re}\ge0\}\ \  \Longrightarrow \text{\ \  the LMM is A-stable''}
\]
is, in general, \textit{false}---despite the fact that some papers in the literature seem to use this necessary condition (see our Remark \ref{rem:Astabneccond} below) to conclude that the LMM is A-stable.

\begin{ex} Consider, for example, the $2^\text{nd}$-order LMM
    \[y_{n+3}+4y_{n+2}-7y_{n+1}+2y_n=h \big(f(y_{n+3})+3 f(y_{n+2})\big)\]
for which the RLC is a subset of the closed right half-plane, but this LMM is not A-stable (it is not even zero-stable).
\end{ex}

In order to conclude A-stability, one needs an additional condition such as strong zero-stability, or the existence of a stable point in the open left half-plane.
We will instead make use of the following characterization.
\begin{thm}[{\cite[Section V.1, Theorem 1.3]{wanner1996solving}}]\label{Astabilitycharacterization}
    If the LMM \eqref{LMM} is A-stable, then
    \begin{align} \label{astablecond}
    \mathrm{Re}\left(\frac{\rho\left(\zeta\right)}{\sigma\left(\zeta\right)}\right)>0 \quad\text{\ \ for any\ \  }\zeta\in\mathbb{C}\text{\ \ with\ \ }|\zeta|>1\text{.}
    \end{align}
    For irreducible methods the converse is also true: \eqref{astablecond} implies A-stability.
\end{thm}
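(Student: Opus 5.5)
Both directions rest on one observation. For a fixed $z\in\mathbb{C}$, a point $\zeta$ is a root of $\pi(\cdot;z)$ if and only if $\rho(\zeta)=z\sigma(\zeta)$. When $\sigma(\zeta)\ne0$ this says exactly $z=\rho(\zeta)/\sigma(\zeta)$. So the question of which $z$ admit a root $\zeta$ with $|\zeta|>1$ is a question about the values of the rational map $\rho/\sigma$ on $\{|\zeta|>1\}$. Throughout, the normalization $\alpha_k=1$ and the standing assumption \eqref{implicitasusumption} are in force.

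\emph{Necessity.} Assume A-stability and fix $\zeta_0$ with $|\zeta_0|>1$.
\begin{enumerate}
\item First I will show $\sigma(\zeta_0)\ne0$. If $\sigma(\zeta_0)=0$, then $\rho(\zeta_0)\ne0$, by irreducibility in the irreducible case; otherwise we must argue directly. As $z\to\infty$ in $\{\mathrm{Re}\le0\}$, the roots of $\pi(\cdot;z)/(-z)=\sigma-\rho/z$ tend to the roots of $\sigma$, by the continuity argument in the proof of Proposition \ref{LMMdampingfactorprop}. Hence some root of $\pi(\cdot;z)$ has modulus greater than $1$ for large $|z|$, contradicting $z\in\stab$. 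This continuity argument works with or without irreducibility, so I will use it in both cases.
\item Next put $z_0:=\rho(\zeta_0)/\sigma(\zeta_0)$. Then $\zeta_0$ is a root of $\pi(\cdot;z_0)$ lying outside $\overline{\mathbb{D}_1}$, so $z_0\notin\stab$. By A-stability this forces $\mathrm{Re}(z_0)>0$, which is \eqref{astablecond}.
\end{enumerate}

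\emph{Sufficiency for irreducible methods.} Assume \eqref{astablecond} and fix $z$ with $\mathrm{Re}(z)\le0$. We need three things: $\deg\pi(\cdot;z)=k$, no root of $\pi(\cdot;z)$ in $|\zeta|>1$, and simplicity of any roots on $|\zeta|=1$.
\begin{enumerate}
\item \emph{Degree.} The leading coefficient is $1-z\beta_k$, which vanishes only at $z=1/\beta_k$. I will show $\mathrm{Re}(1/\beta_k)>0$. Indeed, as $\zeta\to\infty$ we have $\rho(\zeta)/\sigma(\zeta)\to1/\beta_k$, so \eqref{astablecond} gives $\mathrm{Re}(1/\beta_k)\ge0$.
\item \emph{Strictness in the degree step.} To upgrade this to $\mathrm{Re}(1/\beta_k)>0$, I will use that $g(w):=\rho(1/w)/\sigma(1/w)$ is analytic near $w=0$. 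Its real part is harmonic on the punctured disk, positive there, and tends to a limit at $0$. If that limit were $0$, the minimum principle would be violated unless $g$ is constant. If $g$ is constant, then $\rho=c\sigma$, which contradicts irreducibility.
\item \emph{Roots outside the closed disk.} Suppose $\pi(\zeta;z)=0$ with $|\zeta|>1$. Then $\sigma(\zeta)\ne0$, since otherwise $\rho(\zeta)=0$ as well, contradicting irreducibility. Hence $z=\rho(\zeta)/\sigma(\zeta)$ has positive real part, a contradiction.
\item \emph{Roots on the unit circle.} Let $|\zeta_0|=1$ be a root. Again $\sigma(\zeta_0)\ne0$, so $z=\rho(\zeta_0)/\sigma(\zeta_0)$. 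A multiple root means $\zeta_0$ is a critical point of $\phi:=\rho/\sigma$, because $\pi'=\rho'-z\sigma'$ and $\phi'=(\rho'\sigma-\rho\sigma')/\sigma^2$. Near a critical point of order $m\ge1$, $\phi-z$ behaves like $c(\zeta-\zeta_0)^{m+1}$. It therefore maps a half-neighbourhood $\{|\zeta|>1\}$ onto a sector of opening angle greater than $\pi$ about $z$. Such a sector contains points with $\mathrm{Re}\le\mathrm{Re}(z)\le0$, which contradicts \eqref{astablecond}.
\end{enumerate}

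\emph{Main obstacle.} I expect the hardest part to be the local angle-counting argument in the last step. The key fact is that $|\zeta|>1$ near $\zeta_0$ is asymptotically a half-disk, so its image under $(\zeta-\zeta_0)^{m+1}$ covers an angle of $(m+1)\pi\ge2\pi$ and hence meets every direction. The same open-mapping reasoning also handles the degree-$k$ claim in a uniform way.
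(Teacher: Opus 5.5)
Your proof is correct and complete. Necessity follows from inverting $z=\rho(\zeta)/\sigma(\zeta)$, once your continuity argument has excluded zeros of $\sigma$ in $|\zeta|>1$. Sufficiency follows from the same inversion, together with the minimum-principle argument at $\zeta=\infty$ (which rules out the degree drop at $z=1/\beta_k$) and the local power-series argument excluding multiple roots on $|\zeta|=1$.

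The paper gives no proof of its own and simply cites Hairer--Wanner; your argument is the standard one for this characterization. The only phrasing to tighten is in the last step. The set $\{|\zeta|>1\}$ contains the entire open tangent half-plane $\{\zeta_0+u:\ \mathrm{Re}(\overline{\zeta_0}u)>0\}$. So rather than saying a half-neighbourhood is mapped ``onto'' a sector, you can directly pick $u=re^{i\alpha}$ in that half-plane with $\mathrm{Re}(c\,e^{i(m+1)\alpha})<0$, and take $r>0$ small.
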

\begin{rem}\label{rem:Astabneccond}
    By switching to polar coordinates, we have $\zeta=r e^{i\theta}$ with  $r>1$ and $\theta\in[0,2\pi]$ in \eqref{astablecond}, so taking the limit as $r\to 1^+$, we get that 
    \[
    \text{A-stability\ \ } \Longrightarrow \emph{\ \ RLC} \subseteq \{\mathrm{Re}\ge0\}.
    \]
\end{rem}
\medskip
Theorem \ref{Astabilitycharacterization} can be implemented in the Wolfram Language (\textit{Mathematica} version 14) as follows. 
\begin{algo}\label{Astabilitydeterminationalg}\ 
\begin{tcolorbox}[colframe=black, colback=white, sharp corners]
\emph{\textbf{Step 1.}}
    Given an irreducible ILMM \eqref{LMM} with characteristic polynomials $\rho$ and $\sigma$, define
\begin{flalign*}
 & \emph{\texttt{realPartInPolar\,=\,Simplify\,@\,ComplexExpand\,@\,Re[\,$\rho$[$\zeta$]/$\sigma$[$\zeta$] /. $\zeta\to$ r\,Exp[I\,$\theta$]\,]}}& 
 \end{flalign*}
\emph{\textbf{Step 2.}} Convert the trigonometric expressions to polynomials  using the Weierstrass substitution $\theta=2\arctan(t)$:
\begin{flalign*}
 & \emph{\texttt{rationalFunction\,=\,Simplify\,@\,TrigExpand[\,realPartInPolar
 \,/. $\theta\to$ 2 ArcTan[t]]\,}}  & 
 \end{flalign*}
\emph{\textbf{Step~3.}} Eliminate the two quantifiers $\forall r>1$ and $\forall t\in\mathbb{R}$ from the inequality \emph{\texttt{rationalFunction > 0}}:
\begin{flalign*}
 &  \emph{\texttt{Reduce[\,ForAll[\{t, r\}, r > 1, rationalFunction > 0], Reals\,]}}
 & 
 \end{flalign*}
\emph{\textbf{Output.}} If the output of \emph{\texttt{Reduce}} is \emph{\texttt{True}}, then the LMM is A-stable. If the output is \emph{\texttt{False}}, then A-stability does not hold. When the LMM coefficients contain some parameters, the output of \texttt{\emph{Reduce}} is a set of necessary and sufficient conditions on the $\alpha$- and $\beta$-coefficients such that the corresponding LMM is A-stable.
\end{tcolorbox}
\end{algo}

\begin{rem}\label{rem:Astabilitywithparameters}
 The code checks A-stability of a given LMM within seconds on a standard laptop. In later sections, we use this approach in families of LMMs, with the parameters being certain $\alpha$- or $\beta$-coefficients.
\end{rem}
\begin{rem}
   In certain special cases, instead of the general Weierstrass substitution in Step 2 above, we can use other substitutions (see Section \ref{sec:bdflikek4k5}) to speed up the algorithm. 
\end{rem}



\subsection{Characterizations of L($\kappa$)-stability}
\label{sec:L-stability}

A special class of 
ILMMs is defined below. 
\begin{dfn}\label{BDFLdefinition}
    A $k$-step 
    ILMM \eqref{LMM} is said to be BDF-like (BDFL for short) if
    \begin{align} \label{bdflike}
    \beta_0 = \beta_1 = \cdots = \beta_{k-1}=0, \quad \beta_k\ne 0.
    \end{align}
    When the step number $k\ge 2$ and order $p\ge 1$ are also indicated, we use the notation BDFL$k_p$. 
\end{dfn}
\begin{rem}
    For the classical BDF methods, we have $k=p$, but herein we focus on methods with $p=2$ and $k>2$.
\end{rem}
\noindent Due to \eqref{Lkappacharacterization}, the following equivalence holds:
\begin{equation}\label{L0characterization}
\text{a given ILMM } \eqref{LMM} \text{ is L}(0)\text{-stable }\Longleftrightarrow \text{ the ILMM is BDF-like}.
\end{equation}

\noindent In particular, when designing LMMs, imposing L$(0)$-stability is straightforward.\\ 

However, devising conditions on the coefficients to ensure L($\kappa$)-stability when $\kappa>0$ is more challenging: for a given ILMM with second characteristic polynomial $\sigma$, we utilize Proposition \ref{prop:kappadisk} and Theorem \ref{thm:SCreduction} to get the following.
\begin{algo}\label{vonNeumannalgorithm} \ 
\begin{tcolorbox}[colframe=black, colback=white, sharp corners]
\emph{\textbf{Step 1.}} Define $Q(\zeta):=\sigma(\kappa \zeta)$.\\
\emph{\textbf{Step 2.}} Generate its reduced polynomial $Q^\mathbf{r}$ \eqref{reducedpolydef}.\\
\emph{\textbf{Step 3.}} 
\begin{itemize}
    \item \emph{\textbf{Case 1:}} If $Q^\mathbf{r}\not\equiv 0$, then check whether $|\lc(Q)|>|\cc(Q)|$ and $Q^\mathbf{r}$ is a von Neumann polynomial. If yes, then the ILMM is L($\kappa$)-stable. If no, then the ILMM is not L($\kappa$)-stable.
     \item \emph{\textbf{Case 2:}} If $Q^\mathbf{r}\equiv 0$, then check whether $Q'$ is a von Neumann polynomial. If yes, then the ILMM is L($\kappa$)-stable. If no, then the ILMM is not L($\kappa$)-stable.
\end{itemize}
\emph{\textbf{Recursion.}} In both Cases 1--2, the degree of $Q$ is reduced by at least $1$ (the $\cdot^\mathbf{r}$ operator can reduce the degree by more than $1$), so we recursively continue the decision process from Step 2 with $Q_\text{new}:=Q^\mathbf{r}$ (Case 1) or $Q_\text{new}:=Q'$ (Case 2) until we reach degree 1, when the linear polynomial can be solved to see if its root is in $\overline{\mathbb{D}_1}$ or not. 
\end{tcolorbox}
\end{algo}

\begin{rem}
    When applying the above process to ILMMs where the $\beta$-coefficients contain some parameters, the coefficients of successive reduced polynomials will themselves become more and more complicated polynomials in the parameters.    
\end{rem}

 Checking strict L($\kappa$)-stability for $\kappa>0$ is technically  simpler, although it still involves a recursive step. 
\begin{algo}\label{Schuralgorithm} \ 
\begin{tcolorbox}[colframe=black, colback=white, sharp corners]
\emph{\textbf{Step 1.}} Define $Q(\zeta):=\sigma(\kappa \zeta)$.\\
\emph{\textbf{Step 2.}} Generate its reduced polynomial $Q^\mathbf{r}$ \eqref{reducedpolydef}.\\
\emph{\textbf{Step 3.}} Check whether $|\lc(Q)|>|\cc(Q)|$ and $Q^\mathbf{r}$ is a Schur polynomial. If yes, then the ILMM is strictly L($\kappa$)-stable. If no, then the ILMM is not strictly L($\kappa$)-stable.
\end{tcolorbox}
\end{algo}

\subsection{Optimization}  \label{sec:optimization}
Taking into account all of the foregoing, and setting $p=2$ in Definition \ref{def:errorconstant} in particular, we arrive at the following optimization problem.

\begin{problem}
Given $\kappa\in[0,1]$ and $k\ge 2$, find 
$\alpha_j$ and $\beta_j$ (for $0 \le j \le k)$ to
minimize 
\begin{equation}\label{errorconstantmodulustobeminimized}
|C|:=\left| \frac{1}{\sigma(1)}\sum_{j=0}^k \left(\frac{j^3}{3!} \alpha_j - \frac{j^2}{2!} \beta_j\right)\right|
\end{equation}
subject to 
\begin{itemize}
    \item[\circled{0}] normalization, irreducibility and implicitness: \eqref{normalizationassumption}, \eqref{irredasusumption}, \eqref{implicitasusumption};
    \item[\circled{1}] $2^{\text{nd}}$-order consistency: \eqref{consistency};
    \item[\circled{2}] A-stability: Definition \ref{def:A-stability};
    \item[\circled{3}] L$(0)$-stability  or (strict) L$(\kappa)$-stability (for $\kappa>0$): Definitions \ref{Lkappadef}--\ref{strictLkappadef};
    \item[\circled{4}] structural conditions: $\beta_j=0$ for $j\in S_\text{ind}$ with some index set $S_\text{ind}\subseteq\{0,1,\ldots,k-1\}$.
\end{itemize}
\end{problem}
\begin{rem}\label{rem:structuralconditions}
The structural conditions are introduced in order to make the problem more tractable.  This optimization problem involves a complicated set in the $2k$-dimensional parameter space of $\alpha$- and $\beta$-coefficients.  For larger values of $k$, its solution (particularly the characterization of the A-stable set) may become intractable.  We therefore resort, where necessary, to studying lower-dimensional subsets by setting some values $\beta_j$ to zero \emph{a priori}.  This enables us to find useful methods, even if we cannot guarantee that they are optimal over the larger set.
\end{rem}

\section{Optimal $2^\text{nd}$-order A-stable and L$(0)$-stable BDFL methods with $k\in\{3,4,5\}$ steps} \label{sec:bdflike}
In this section, we 
solve the optimization problem posed in Section \ref{sec:optimization} for $\kappa=0$ in the families of $k$-step ILMMs with $k\in\{3,4,5\}$. 
According to \eqref{L0characterization}, the L$(0)$-stability assumption implies that the ILMMs are BDFL methods. 

\begin{rem}
    For $k=2$, we recover the classical BDF2 method \eqref{BDF2LMMformula}; see Corollary \ref{cor:k=2Lkappaoptimalmethods}.
\end{rem}




\subsection{Three-step methods}\label{sec:bdflikek3}

In light of the BDF-like condition \eqref{bdflike} and the normalization condition \eqref{normalizationassumption}, we have four free parameters: $\alpha_0, \alpha_1, \alpha_2$, and $\beta_3$.  The three consistency conditions
\eqref{consistency} allow $(\alpha_0,\alpha_1,\alpha_2)$ to be rewritten in terms of $\beta_3$.  With this parameterization, we find that the method is irreducible as long as $\beta_3 \ne 2/3$.  Next, we apply Algorithm \ref{Astabilitydeterminationalg}, which reveals that the method is A-stable for $\beta_3 \in [3/5, 1]$.  Using \textit{Mathematica}'s \texttt{Minimize} command, we find that $\beta_3=3/5$ yields the minimum error constant \eqref{errorconstantmodulustobeminimized}, with value $|C|_\text{opt}=1/6$.

Thus, this method achieves an error constant equal to half of that of BDF2, at the cost of using an additional step.
It was previously proposed in \cite{vatsa2010re}, and is displayed in Table \ref{tab:bdfl_345}.

\subsection{Four- and five-step methods}\label{sec:bdflikek4k5}

After applying the normalization assumption and the consistency condition, we are left with two parameters $(\alpha_3,\beta_4)$ for $k=4$, and three parameters $(\alpha_3,\alpha_4,\beta_5)$ for $k=5$. To reduce the computing time, we have modified our process as follows.
\begin{enumerate}
    \item Instead of the general Weierstrass substitution in Algorithm \ref{Astabilitydeterminationalg}, 
    we convert the trigonometric expressions to polynomials in a simpler way via the substitutions 
    \begin{equation}\label{simplersubstitution}
        c:=\cos(\theta) \text{\ \  and \ \ } \sin^{2\ell}(\theta)=(1-c^2)^\ell\quad (\ell\in\mathbb{N}^+), 
    \end{equation}
     since only even powers of $\sin(\theta)$ are present.
    \item Instead of the characterization of A-stability (Theorem \ref{Astabilitycharacterization}), we use only the necessary condition given in Remark \ref{rem:Astabneccond}, that is, we apply \texttt{Reduce} with $r=1$ and a single quantifier 
    $\forall c\in[-1,1]$ to simplify the inequality \texttt{rationalFunction $\ge$ 0}. Let $\mathcal{P}$ denote the parameter set returned by \texttt{Reduce}, and $\mathcal{A}$ denote the parameter set of A-stable methods; then we have $\mathcal{A}\subseteq\mathcal{P}$.  The explicit description of $\mathcal{A}$ is unknown.
    \item We use \texttt{Minimize} to find the optimal value of the objective function \eqref{errorconstantmodulustobeminimized} over $\mathcal{P}$. The minimum turns out to be unique. Clearly, $\displaystyle \inf_\mathcal{A}|C|\ge\min_\mathcal{P}|C|$.
    \item Finally, we verify that the candidate ILMM (with the smallest $|C|$ in $\mathcal{P}$) is in fact irreducible and A-stable by using Algorithm \ref{Astabilitydeterminationalg} itself for this specific method (with concrete $\alpha$- and $\beta$-coefficients, that is, without parameters). Hence, $\displaystyle \inf_\mathcal{A}|C|=\min_\mathcal{P}|C|$.
\end{enumerate}

\subsection{Summary}

\noindent The optimal and essentially optimal BDFL methods are summarized in Table \ref{tab:bdfl_345}. Figure \ref{fig:err_to_step} shows a comparison of the error constant and steps, while Figures \ref{fig:bdfl3}--\ref{fig:bdfl5} display their regions of absolute stability (Definition \ref{def:stabilityregion}).

\begin{table}
\centering
\renewcommand{\arraystretch}{1.3}

\begin{tabular}{c|p{0.53\textwidth}|c}
\textbf{Steps} & \textbf{LMM} & \textbf{Error constant $C$} \\
\hline
3 &
$y_{n+3}=\frac{3}{2}y_{n+2}-\frac{3}{5}y_{n+1}+\frac{1}{10}y_n+\frac{3}{5}h f(y_{n+3})$ &
$-\frac{1}{6}$ \\
\hline
4 &
$y_{n+4}=\frac{2}{41}(18+11\sqrt{2})y_{n+3}
-\frac{2}{41}(1+12\sqrt{2})y_{n+2}
+\frac{2}{41}(10-3\sqrt{2})y_{n+1}
-\frac{1}{41}(13-8\sqrt{2})y_n
+\frac{4}{41}(10-3\sqrt{2})h f(y_{n+4})$ &
$\frac{1}{\sqrt{2}}-\frac{5}{6}\approx -0.12623$ \\
\hline
5 &
$y_{n+5}=\frac{1}{44}(45+14\sqrt{5})y_{n+4}
-\frac{\sqrt{5}}{2}y_{n+3}
+\frac{1}{88}(25+9\sqrt{5})y_{n+2}
-\frac{1}{22}(10-3\sqrt{5})y_{n+1}
+\frac{1}{88}(13-5\sqrt{5})y_n
+\frac{5}{44}(7-\sqrt{5})hf(y_{n+5})$ &
$\frac{1}{2\sqrt{5}}-\frac{1}{3}\approx -0.10973$
\end{tabular}

\caption{The optimal BDFL$k_2$ methods for $k\in\{3,4\}$ and the essentially optimal BDFL$5_2$ method (Remark \ref{rem:k=5complexity}) together with their associated error constants, see Section \ref{sec:bdflike}
}
\label{tab:bdfl_345}
\end{table}


\begin{figure}
    \centering
    \includegraphics[width=0.75\linewidth]{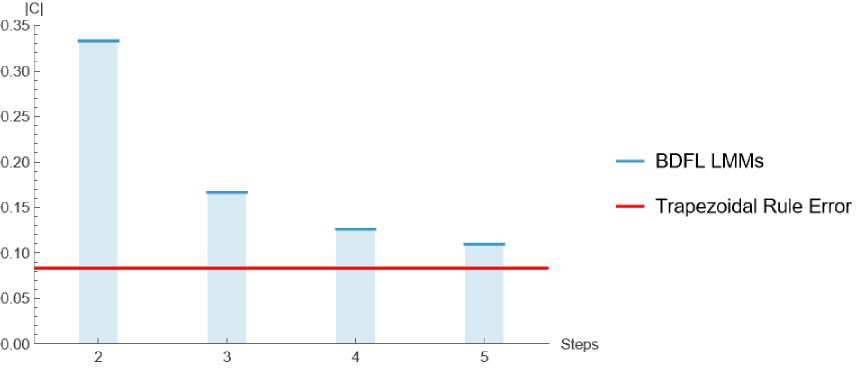}
    \caption{The modulus of the optimal error constant \eqref{errorconstantmodulustobeminimized} 
    of the BDFL methods in Table \ref{tab:bdfl_345}. The classical BDF2 method has $|C|=1/3$, and the trapezoidal method yields the theoretically best error constant.}
    \label{fig:err_to_step}
\end{figure}

\begin{rem}\label{rem:k=5complexity}
    The complexity of obtaining the methods in Table  \ref{tab:bdfl_345} increases significantly with $k$. As we have seen in Section \ref{sec:bdflikek3}, the problem of optimality for $k=3$ is almost trivial. For $k=5$, however, the set $\mathcal{A}$ of A-stable methods 
    in the parameter space $(\alpha_3,\alpha_4,\beta_5)$ has been described by a logical combination of inequalities containing more than 200 disjoint components with boundaries being complicated  algebraic surfaces, and the description itself consisting of more than 2 MB of characters. \texttt{\emph{Minimize}} was then applied to these components, and the method for $k=5$ in Table \ref{tab:bdfl_345} was found. For certain components, however, \texttt{\emph{Minimize}} did not return a result in a reasonable amount of  time, so those components were discarded. Hence, we cannot claim that the BDFL$5_2$ method in Table \ref{tab:bdfl_345} is optimal; it is, however,  essentially optimal in the following sense.
    
    It is not difficult to show directly with \emph{\texttt{Reduce}} that the set $\mathcal{A}$  is a subset of the box 
    \[\left[-\frac{2}{35} \left(1+\sqrt{561}\right),\frac{2}{35} \left(-1+\sqrt{561}\right)\right]\times\left[-\frac{1}{5} \left(5+\sqrt{15}\right),\frac{1}{3} \left(\sqrt{7}-1\right)\right]\times\left[\frac{2}{35} \left(14-\sqrt{21}\right), 2\right]
    \] 
    in the parameter space $(\alpha_3,\alpha_4,\beta_5)$; in particular, $\beta_5>0$. By ignoring a non-negative factor $\frac{2(c-1)^2}{\beta_5}$, \texttt{\emph{rationalFunction}} in Step 2 of Section \ref{sec:bdflikek4k5} for $k=5$ becomes the polynomial 
    \begin{equation}\label{rem33poly}
    c^3 \left(-8 \alpha _3-24 \alpha _4+28 \beta _5-48\right)+ c^2 \left(-4 \alpha _3-16 \alpha _4+24
   \beta _5-36\right)+
   \end{equation}
   \[c \left(4 \alpha _3+10 \alpha _4-6 \beta _5+16\right)+(\alpha _3+4 \alpha _4-4 \beta _5+8).\]
   Now, instead of requiring the non-negativity of \eqref{rem33poly} for any $c\in[-1,1]$, we just require non-negativity for finitely many values of $c$, say, by using an equidistant grid of spacing $1/500$. 
   In this way, we obtain $1001$ linear inequalities in the parameters $(\alpha_3,\alpha_4,\beta_5)$. Let $\mathcal{P}$ denote the convex and bounded polyhedron defined by the intersection of these 1001 closed half-spaces. Then, again, we have $\mathcal{A}\subseteq\mathcal{P}$. Now, by using only rational arithmetic, \texttt{\emph{Minimize}} can quickly (in less than 30 seconds) determine that $\displaystyle \min_\mathcal{P}|C|=\frac{53861}{490866}=:\mu_{\mathcal{P},5}$. But the error constant $|C|=|\frac{1}{2\sqrt{5}}-\frac{1}{3}|$ of the irreducible and A-stable BDFL$5_2$ method in Table \ref{tab:bdfl_345} satisfies
   \[
   \left|\frac{1}{2\sqrt{5}}-\frac{1}{3}\right|\in \left(\mu_{\mathcal{P},5}, \ \mu_{\mathcal{P},5}+ 10^{-6}\right),
   \]
   so the chain of inequalities $ \left|\frac{1}{2\sqrt{5}}-\frac{1}{3}\right|\ge \displaystyle \inf_\mathcal{A}|C|\ge\min_\mathcal{P}|C|=\mu_{\mathcal{P},5}$
    shows that the error constant $|C|$ of the BDFL$5_2$ method differs by at most $10^{-6}$ from the theoretical global optimum $\inf_\mathcal{A}|C|$.
\end{rem}

\begin{figure}
     \centering
     \begin{subfigure}[b]{0.49\textwidth}
         \centering
         \includegraphics[width=\linewidth]{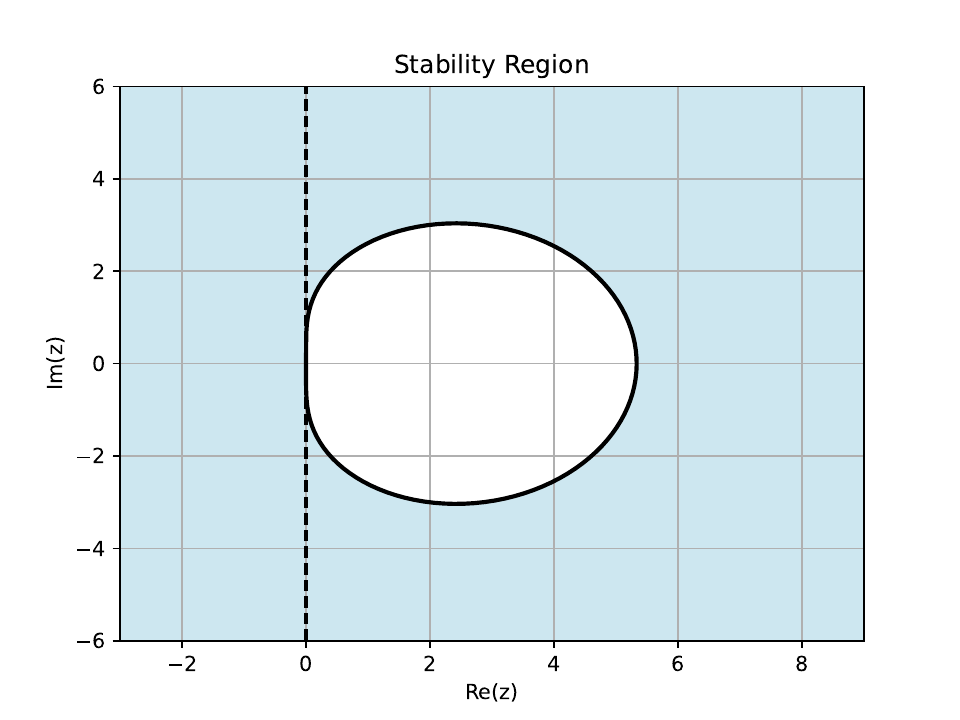}
     \end{subfigure}
     \begin{subfigure}[b]{0.49\textwidth}
         \centering
         \includegraphics[width=\linewidth]{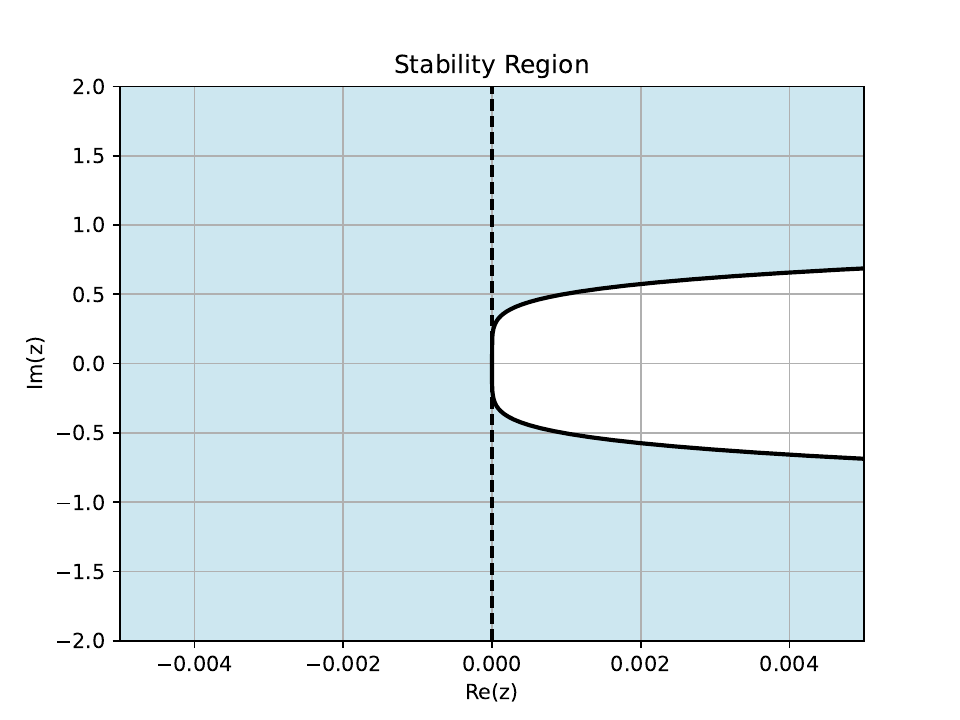}
     \end{subfigure}
     
     \caption{The region of absolute stability $\mathcal{S}$ (blue) for the optimal BDFL$3_2$ method (left), and $\partial\mathcal{S}$  near the imaginary axis (right), see Table \ref{tab:bdfl_345}}
     \label{fig:bdfl3}
\end{figure}


\begin{figure}
     \centering
     \begin{subfigure}[b]{0.49\textwidth}
         \centering
         \includegraphics[width=\linewidth]{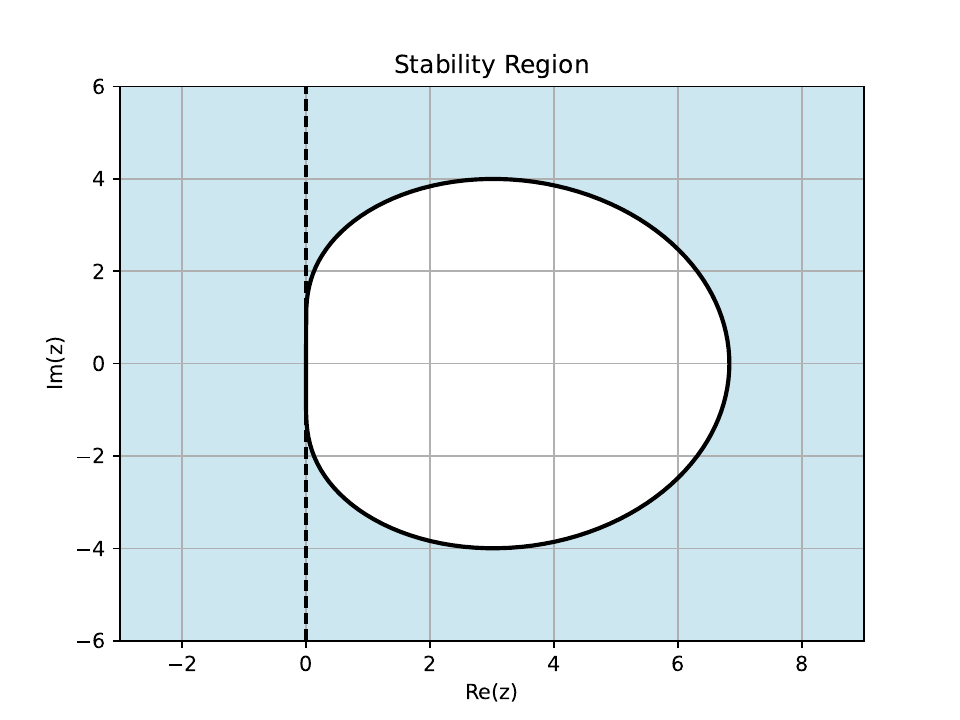}
     \end{subfigure}
     \begin{subfigure}[b]{0.49\textwidth}
         \centering
         \includegraphics[width=\linewidth]{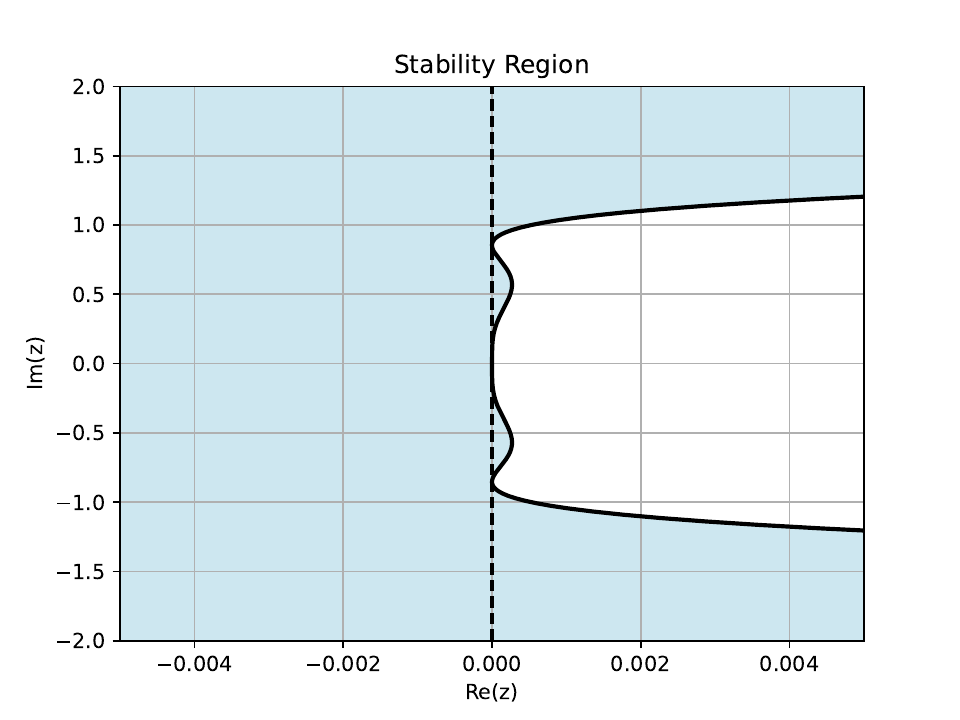}
     \end{subfigure}
     
     \caption{The region of absolute stability $\mathcal{S}$ (blue) for the optimal BDFL$4_2$ method (left), and $\partial\mathcal{S}$  near the imaginary axis (right), see Table \ref{tab:bdfl_345}}
     \label{fig:bdfl4}
\end{figure}



\begin{figure}
     \centering
     \begin{subfigure}[b]{0.49\textwidth}
         \centering
         \includegraphics[width=\linewidth]{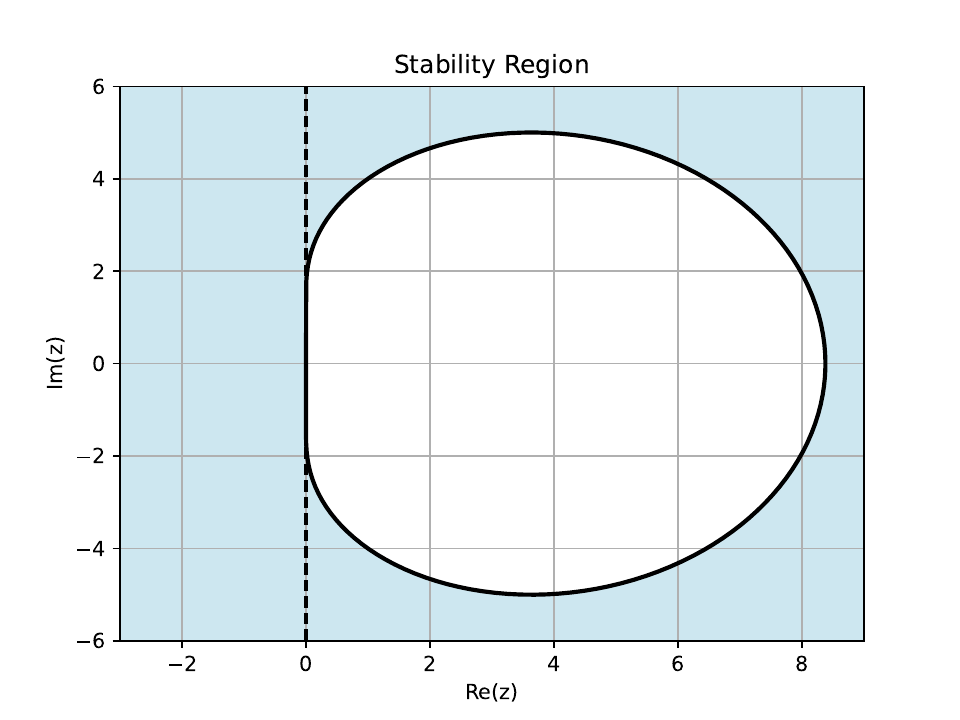}
     \end{subfigure}
     \begin{subfigure}[b]{0.49\textwidth}
         \centering
         \includegraphics[width=\linewidth]{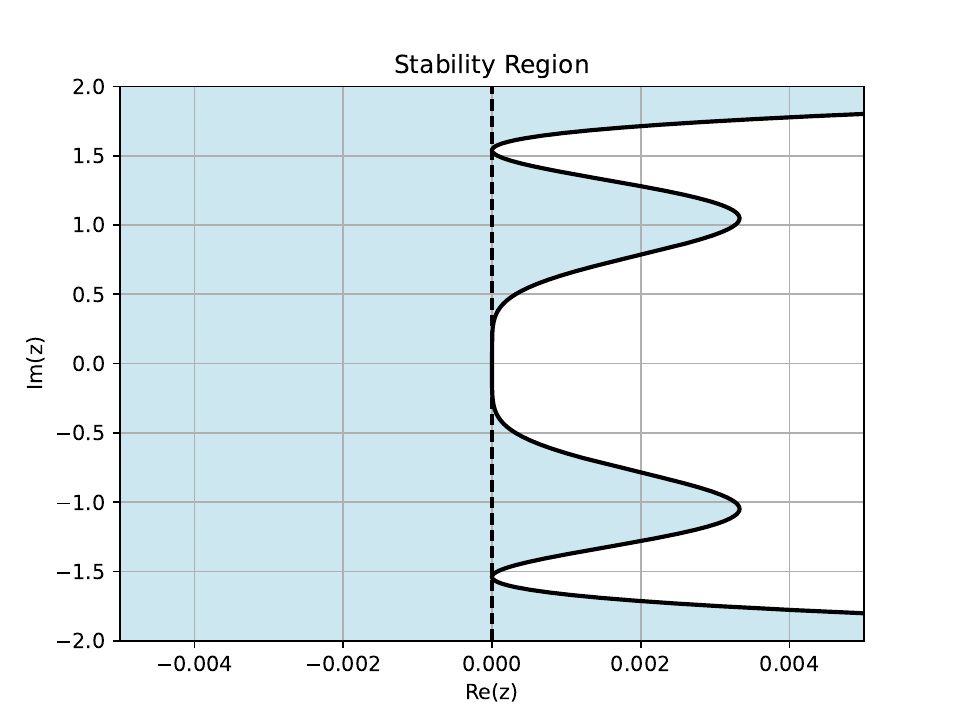}
     \end{subfigure}
     
     \caption{The region of absolute stability $\mathcal{S}$ (blue) for the essentially optimal BDFL$5_2$ method (left), and $\partial\mathcal{S}$  near the imaginary axis (right), see Table \ref{tab:bdfl_345}}
     \label{fig:bdfl5}
\end{figure}


\section{Optimal A-stable and L$(\kappa)$-stable methods}
\label{sec:Lkappa}

In Section \ref{sec:bdflike}, 
we studied methods with asymptotic damping factor
equal to zero.  In practice, for most problems good results can be
obtained with methods that have a non-zero (but less than unity) asymptotic
damping factor, since this leads to the elimination of stiff oscillations
within a few steps.
We now turn to the study of such methods, denoted as L($\kappa$)-stable
methods where $\kappa$ is the asymptotic damping factor.



\subsection{$2$-step $2^\text{nd}$-order methods}\label{sec:k=2strictlyLkappamethods}

We start by considering two-step methods of second order.  In this class, the unique L(0)-stable method is BDF2, while the method with smallest error coefficient is the trapezoidal method, which unfortunately is not even strictly L(1)-stable.
Thus we look for methods that have a smaller error coefficient than BDF2 and a smaller asymptotic damping factor than the trapezoidal method.



Taking into account the normalization \eqref{normalizationassumption}  and applying the order conditions \eqref{consistency}, we can express
the set of two-step second-order methods in terms of the two parameters $\beta_1, \beta_2$, with the remaining coefficients given by
\begin{align} \label{2stepconsistency}
    (\alpha_0,\alpha_1,\alpha_2,\beta_0)=(-2 \beta _1-4 \beta _2+3,  2 \beta _1+4 \beta _2-4,1,\beta _1+3 \beta _2-2).
\end{align}

Using Mathematicas's \verb|Resultant|, we find that the irreducibility assumption \eqref{irredasusumption} holds if and only if
\[
\left(2 \beta _2-1\right) \left(\beta _1+2 \beta _2-1\right)\neq 0.
\]

To find the conditions for A-stability, we use Algorithm \ref{Astabilitydeterminationalg}:
 the necessary and sufficient condition for irreducibility and A-stability in terms of the coefficients $\beta_{1,2}$ is
 \begin{equation}\label{2stepAstabilitycondition}
      \left(\beta _1\leq 0\land \frac{1}{2} \left(1-\beta _1\right)<\beta _2\leq \frac{1}{2} \left(2-\beta _1\right)\right)\lor
   \left(0<\beta _1<1\land \frac{1}{2}<\beta _2\leq \frac{1}{2} \left(2-\beta _1\right)\right).
 \end{equation}

\begin{figure}
    \centering
    \includegraphics[width=0.99\linewidth]{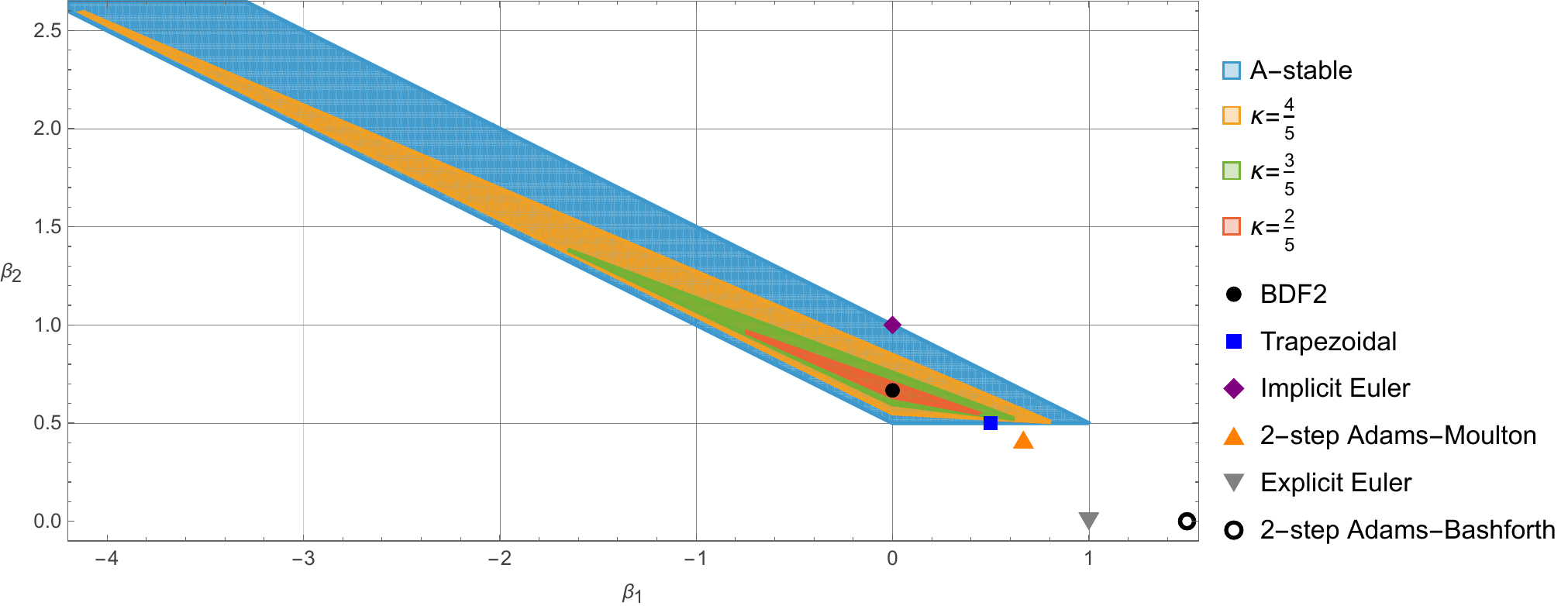}
    \caption{The unbounded set of irreducible $2$-step $2^\text{nd}$-order A-stable ILMMs \eqref{2stepAstabilitycondition}
    in blue (the rest of the coefficients are given in  \eqref{2stepconsistency}), 
    the bounded sets of 
    A-stable and strictly L($\kappa$)-stable ILMMs 
    for $\kappa\in\left\{\frac{4}{5},\frac{3}{5},\frac{2}{5}\right\}$, and some particular LMMs (see also Remark \ref{rem:1stepembedding}) 
    } 
    \label{fig:a_stable_2step}
\end{figure}

To investigate L($\kappa$)-stability we fix $\kappa\in(0,1)$ and apply Algorithm \ref{Schuralgorithm}, which yields that 
an ILMM with \eqref{2stepconsistency} is strictly L$(\kappa)$-stable
if and only if
\begin{equation}\label{2stepStrictL(kappa)condition}
 \left| \beta _1+3 \beta _2-2\right| < \left| \kappa ^2 \beta _2\right| \quad  \land \quad \left| \frac{\kappa  \beta _1}{\beta _2 \kappa
   ^2+\beta _1+3 \beta _2-2}\right| <1.  
\end{equation}
Figure \ref{fig:a_stable_2step} displays these regions for some values of $\kappa$.

\begin{rem}\label{rem:1stepembedding}
    In Figure \ref{fig:a_stable_2step} (and in some figures later), we have also displayed some ELMMs and $1$-step methods for comparison. For the purpose of these plots, the explicit and implicit Euler methods are interpreted as reducible $2$-step methods  $y_{n+2}-y_{n+1}=h f(y_{n+1})$ and  $y_{n+2}-y_{n+1}=h f(y_{n+2})$. The trapezoidal method \eqref{trapezoidalmethod} is analogously embedded in the family of 2-step methods.
\end{rem}


To determine the smallest error constant  within the set of A-stable and strictly L($\kappa$)-stable ILMMs, first we substitute the consistency rules \eqref{2stepconsistency} into \eqref{errorconstantmodulustobeminimized} and get
\begin{equation}\label{2stepObjectiveFunction}
     |C|=\frac{1}{12} \left| \frac{\beta_1+8 \beta_2-4}{\beta_1+2 \beta_2-1}\right|.
\end{equation}
Then we use \texttt{Minimize} with objective function \eqref{2stepObjectiveFunction} and constraints \eqref{2stepAstabilitycondition} and \eqref{2stepStrictL(kappa)condition}, which yields that the optimal $|C|$ as a function of $\kappa\in(0,1)$ is
\begin{equation}\label{2stepOptimalvalue}
   |C|_\text{opt}= \frac{\kappa ^2-\kappa +1}{3 (\kappa +1)^2},
\end{equation}
although \texttt{Minimize} indicates that the minimum is not attained, so 
\eqref{2stepOptimalvalue} is only an infimum.

\begin{figure}
    \centering
    \includegraphics[width=0.6\linewidth]{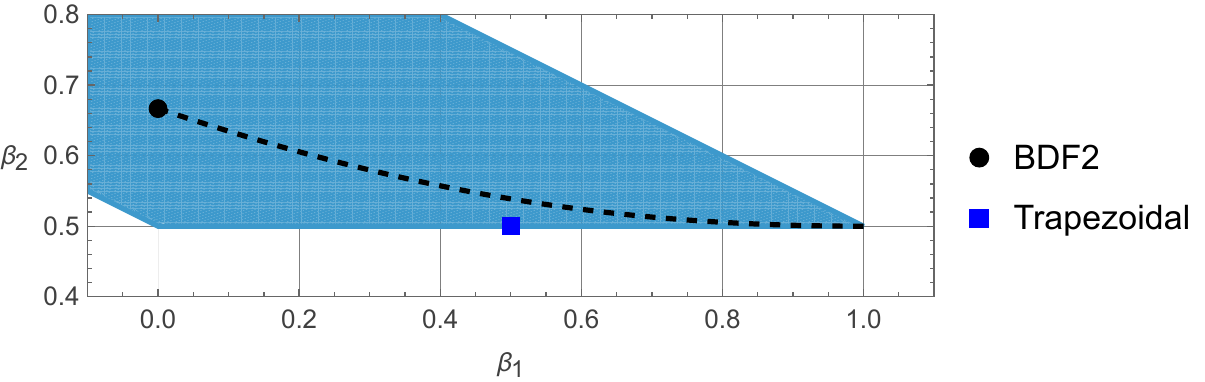}
    \caption{The set of optimal irreducible $2$-step $2^\text{nd}$-order A-stable and L($\kappa$)-stable ILMMs is given by the dashed black curve in the $(\beta_1,\beta_2)$ parameter plane, see Corollary \ref{cor:k=2Lkappaoptimalmethods}.
    }
    \label{fig:a_stable2_and_lk}
\end{figure}


Based on the above investigations, we can identify the optimal (non-strictly) L$(\kappa)$-stable methods---some of which will appear in Table \ref{tab:lk_1234}.

\begin{cor}\label{cor:k=2Lkappaoptimalmethods}
    For any $\kappa\in[0,1)$, the optimal irreducible $2$-step $2^\text{nd}$-order A-stable and L$(\kappa)$-stable ILMM  
    has $|C|_\text{opt}=\frac{\kappa ^2-\kappa +1}{3 (\kappa +1)^2}$
    with coefficients
    \[
       (\alpha_0,\alpha_1,\alpha_2)=\left(\frac{3 \kappa -1}{\kappa -3},\frac{4 (1-\kappa )}{\kappa -3},1\right),
    \]
    \begin{equation}\label{corollary43betacoeffs}
       (\beta_0,\beta_1,\beta_2)=\left(\frac{2 \kappa ^2}{-\kappa ^2+2 \kappa +3},\frac{4 \kappa }{-\kappa ^2+2 \kappa +3},\frac{2}{-\kappa ^2+2 \kappa +3}\right).
    \end{equation}
    The classical BDF2 method corresponds to $\kappa=0$, and the LMM with $\kappa=1$ would be reducible, see Figures \ref{fig:a_stable2_and_lk} and \ref{fig:lk_errmin}.
\end{cor}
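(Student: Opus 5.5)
The plan is to reduce the non-strict problem to the strict one already solved. The non-strict L$(\kappa)$-stable set for parameter $\kappa$ is the intersection over $\kappa'>\kappa$ of the strictly L$(\kappa')$-stable sets. It also contains the strictly L$(\kappa)$-stable set, and is contained in its closure (intersected with the A-stable, irreducible set). Since \eqref{2stepOptimalvalue} gives the infimum $\frac{\kappa^2-\kappa+1}{3(\kappa+1)^2}$ over the strict set, this is a lower bound. It is also the candidate value of the minimum over the non-strict set, by continuity of $|C|$ in \eqref{2stepObjectiveFunction} away from the line $\beta_1+2\beta_2=1$ and of the infimum \eqref{2stepOptimalvalue} in $\kappa$. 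So the proof consists of exhibiting a point of the non-strict set attaining this value and checking uniqueness.

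First, I would verify the explicit method. With $D=-\kappa^2+2\kappa+3=(3-\kappa)(1+\kappa)$ we have $\sigma(\zeta)=\frac{2}{D}(\zeta+\kappa)^2$, so $\Rinf=\kappa$ and the method is L$(\kappa)$-stable. Direct substitution $\beta_1=4\kappa/D$, $\beta_2=2/D$ into \eqref{2stepconsistency} recovers the stated $\alpha$'s and $\beta_0=2\kappa^2/D$. Substituting into \eqref{2stepObjectiveFunction} gives $\beta_1+8\beta_2-4=\frac{4(\kappa^2-\kappa+1)}{D}$ and $\beta_1+2\beta_2-1=\frac{(\kappa+1)^2}{D}$, whose quotient divided by $12$ is $\frac{\kappa^2-\kappa+1}{3(\kappa+1)^2}$. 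For irreducibility, $2\beta_2-1=\frac{(\kappa+1)^2-4\kappa+ \dots}{D}$; more simply, $\rho(-\kappa)\neq0$ for $\kappa\in[0,1)$, since the roots of $\rho$ are $1$ and $\alpha_0=\frac{3\kappa-1}{\kappa-3}$, and $\alpha_0=-\kappa$ forces $\kappa^2=1$. For A-stability, check \eqref{2stepAstabilitycondition}. If $\kappa=0$, then $\beta_1=0$ and $\beta_2=2/3\in(1/2,1]$. If $\kappa\in(0,1)$, we need $0<\beta_1<1$, $\beta_2>1/2$ (i.e.\ $4>D$, true), and $\beta_2\le(2-\beta_1)/2$, i.e.\ $4\le 2D-4\kappa$, i.e.\ $2-2\kappa^2\ge 0$, which holds.

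For optimality and uniqueness, I would argue as follows. On $\{\sigma$ has roots in $\overline{\mathbb{D}_\kappa}\}$, the minimizer must have $\sigma$ with a double root on $|\zeta|=\kappa$. To see this, write $\sigma=\beta_2(\zeta-a)(\zeta-b)$ and express $|C|$ via $\beta_0/\beta_2=ab$ and $\beta_1/\beta_2=-(a+b)$ using \eqref{2stepconsistency}. One then minimizes over $|a|,|b|\le\kappa$, treating real and conjugate-pair roots separately. The expected outcome, consistent with \texttt{Minimize} saying the strict infimum is not attained, is that the minimum sits on the boundary at $a=b=-\kappa$. The main obstacle is doing this minimization rigorously and proving uniqueness. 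I would do it via \texttt{Minimize}/\texttt{Reduce} on the closed constraint set, replacing $<$ by $\le$ in \eqref{2stepStrictL(kappa)condition}, or directly via Algorithm~\ref{vonNeumannalgorithm}, with $\kappa$ as a parameter. This also explains the remark about $\kappa=1$: there $\sigma=\frac12(\zeta+1)^2$, while $\alpha_0=-1$, so $\rho(-1)=0$ and the method is reducible.
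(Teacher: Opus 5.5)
Your proposal is correct in outline but is organized differently from the paper's proof. The paper uses a single \texttt{Reduce} certificate. It runs Algorithm~\ref{vonNeumannalgorithm} together with \eqref{2stepAstabilitycondition} and the sublevel constraint $|C|\le\frac{\kappa^2-\kappa+1}{3(\kappa+1)^2}$. It shows that Case~1 ($Q^\mathbf{r}\not\equiv 0$) is infeasible and that Case~2 ($Q^\mathbf{r}\equiv 0$) has exactly one solution. Optimality and uniqueness therefore come out together, and \eqref{2stepOptimalvalue} is never used. You instead (i) get the lower bound from the strict infimum, (ii) verify the candidate by hand, and (iii) leave uniqueness as an ``expected outcome''.

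Step (ii) is right. You should add $\beta_1<1\iff(\kappa+3)(\kappa-1)<0$, and your aborted aside should read $2\beta_2-1=(\kappa-1)^2/(3+2\kappa-\kappa^2)$.

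Step (i) is valid, but phrase it through the intersection you mention. The non-strict set at $\kappa$ lies in the strict set at every $\kappa'\in(\kappa,1)$, so $|C|\ge\frac{\kappa'^2-\kappa'+1}{3(\kappa'+1)^2}$ there; then let $\kappa'\downarrow\kappa$. As literally written (``the strict infimum is a lower bound'') it reads backwards, since the strict set at $\kappa$ is the smaller set.

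For (iii), falling back on \texttt{Reduce} just reproduces the paper's proof. However, your root parametrization closes the argument by hand. From \eqref{2stepconsistency}, $\sigma(-1)=4\beta_2-2$ and $\sigma(1)=2(\beta_1+2\beta_2-1)$, so
\[
C=-\frac{1}{12}\Bigl(1+3\,\frac{\sigma(-1)}{\sigma(1)}\Bigr),\qquad \frac{\sigma(-1)}{\sigma(1)}=\frac{(1+a)(1+b)}{(1-a)(1-b)}.
\]
For real $a,b\in[-\kappa,\kappa]$, each factor $\frac{1+a}{1-a}$ is positive, increasing, and at least $\frac{1-\kappa}{1+\kappa}$. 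For a conjugate pair with $r=|a|\le\kappa$, the ratio is $\frac{1+2\,\mathrm{Re}(a)+r^2}{1-2\,\mathrm{Re}(a)+r^2}$. This is increasing in $\mathrm{Re}(a)\ge -r$, with equality only for real $a$, and $\frac{1-r}{1+r}$ decreases in $r$. Either way the ratio is at least $\bigl(\frac{1-\kappa}{1+\kappa}\bigr)^2$, with equality iff $a=b=-\kappa$.

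Hence every L$(\kappa)$-stable second-order two-step ILMM has $|C|\ge\frac{\kappa^2-\kappa+1}{3(\kappa+1)^2}$, with equality only for $\sigma=\beta_2(\zeta+\kappa)^2$. Then $\beta_0=\beta_1+3\beta_2-2$ forces $\beta_2=2/(3+2\kappa-\kappa^2)$, and your step (ii) shows this point is A-stable and irreducible.

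This route shows that A-stability is not an active constraint. It makes step (i) and the reliance on the \texttt{Minimize}-derived \eqref{2stepOptimalvalue} unnecessary, and it even explains why the strict infimum is not attained. It is also checkable by hand. The paper's CAS pipeline, by contrast, is uniform machinery that it reuses for the three-step families, where no such closed-form identity is available.
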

\begin{proof}
    We apply Algorithm \ref{vonNeumannalgorithm}. On the one hand, \texttt{Reduce} shows that Case 1 of the algorithm cannot occur under the assumptions $\kappa\in[0,1]$, \eqref{2stepAstabilitycondition}, and $|C|=\frac{1}{12} \left| \frac{\beta _1+8 \beta _2-4}{\beta _1+2 \beta _2-1}\right| \le\frac{\kappa ^2-\kappa +1}{3 (\kappa +1)^2}=|C|_\text{opt}$. On the other hand, Case 2 under the assumptions $\kappa\in(0,1)$, \eqref{2stepAstabilitycondition}, and $|C| \le|C|_\text{opt}$ has a unique solution $(\beta_1,\beta_2)$ given in \eqref{corollary43betacoeffs}, from which we recover the rest of the coefficients via \eqref{2stepconsistency}. The cases $\kappa\in\{0,1\}$ are checked separately. We remark that for \eqref{corollary43betacoeffs} the second characteristic polynomial $\sigma$ has a double root at $-\kappa$.
\end{proof}

\subsection{Some optimal $3$-step $2^\text{nd}$-order A-stable and L$(\kappa)$-stable  methods}\label{sec:k=3someLkappamethods}

Exploring the space of 3-step $2^\text{nd}$-order A-stable ILMMs is computationally more demanding, since the relevant parameter spaces have an additional
dimension. Therefore, in this section, we impose certain structural conditions (see Remark \ref{rem:structuralconditions}) to restrict the search space.



\subsubsection{Case 1: $\beta_0=\beta_1=0$}\label{sec:beta0beta1ZERO}

 The  normalization and $2^{\text{nd}}$-order consistency conditions \eqref{normalizationassumption},  \eqref{implicitasusumption} and \eqref{consistency}
imply that the two-parameter family of ILMMs now takes the form 
\begin{equation}\label{beta0beta1ZEROconsistency}
 (\alpha_0,\alpha_1,\alpha_2,\alpha_3)=\left(\frac{\beta
   _2}{2}+\frac{3 \beta _3}{2}-1,-2 \beta _2-4 \beta _3+3,\frac{3 \beta _2}{2}+\frac{5 \beta _3}{2}-3,1\right)
\end{equation}
with $\beta_2\in\mathbb{R}$ and $\beta_3\in\mathbb{R}\setminus\{0\}$.

Irreducibility \eqref{irredasusumption} of the ILMM is equivalent to
$
\left(\beta _2+\beta _3\right) \left(3 \beta _3-2\right) \left(\beta _2+3 \beta _3-2\right)\neq 0$.

 Checking A-stability in this family is computationally non-trivial, even when using 
Algorithm \ref{Astabilitydeterminationalg} with
the simpler substitution \eqref{simplersubstitution} instead of the Weierstrass substitution. It turns out that A-stability with irreducibility is equivalent to
\begin{equation}\label{AstabirredinFamilybeta0beta1ZERO}
   \left[ \left(\frac{1}{2}<\beta _3<\frac{2}{3}\ \ \land \ \ \frac{1}{2} \left(2-5 \beta _3\right)+\frac{1}{2} \sqrt{-15 \beta _3^2+4 \beta
   _3+4}\leq \beta _2\leq \beta _3\right)\lor \right.
\end{equation}
\[
\left.\left(\frac{2}{3}<\beta _3<2\ \ \land\ \  -\beta _3\leq \beta _2\leq 2-2 \beta _3\right)\right]\ \ \land\ \ \left(\beta _2+\beta _3\right)  \left(\beta _2+3 \beta _3-2\right)\neq 0.
\]
We remark that the smallest closed horizontal-vertical rectangle in the $(\beta_2,\beta_3)$ parameter plane containing the above set is  $\left[-2,\frac{2}{3}\right]\times\left[\frac{1}{2},2\right]$.

 This time, the second characteristic polynomial $\sigma$ admits a trivial factorization, so it is directly seen that 
\begin{equation}\label{kappastabinFamilybeta0beta1ZERO}
\text{the LMM is L}(\kappa)\text{-stable for some  }\kappa\ge 0\quad \Longleftrightarrow \quad \left| \beta _2\right| \le \kappa  \left| \beta _3\right|.
\end{equation}
Figure \ref{fig:a_stable_b0b1} shows the set of irreducible A-stable and  L$(\kappa)$-stable methods for some values of $\kappa$. 
\begin{figure}
    \centering
    \includegraphics[width=0.75\linewidth]{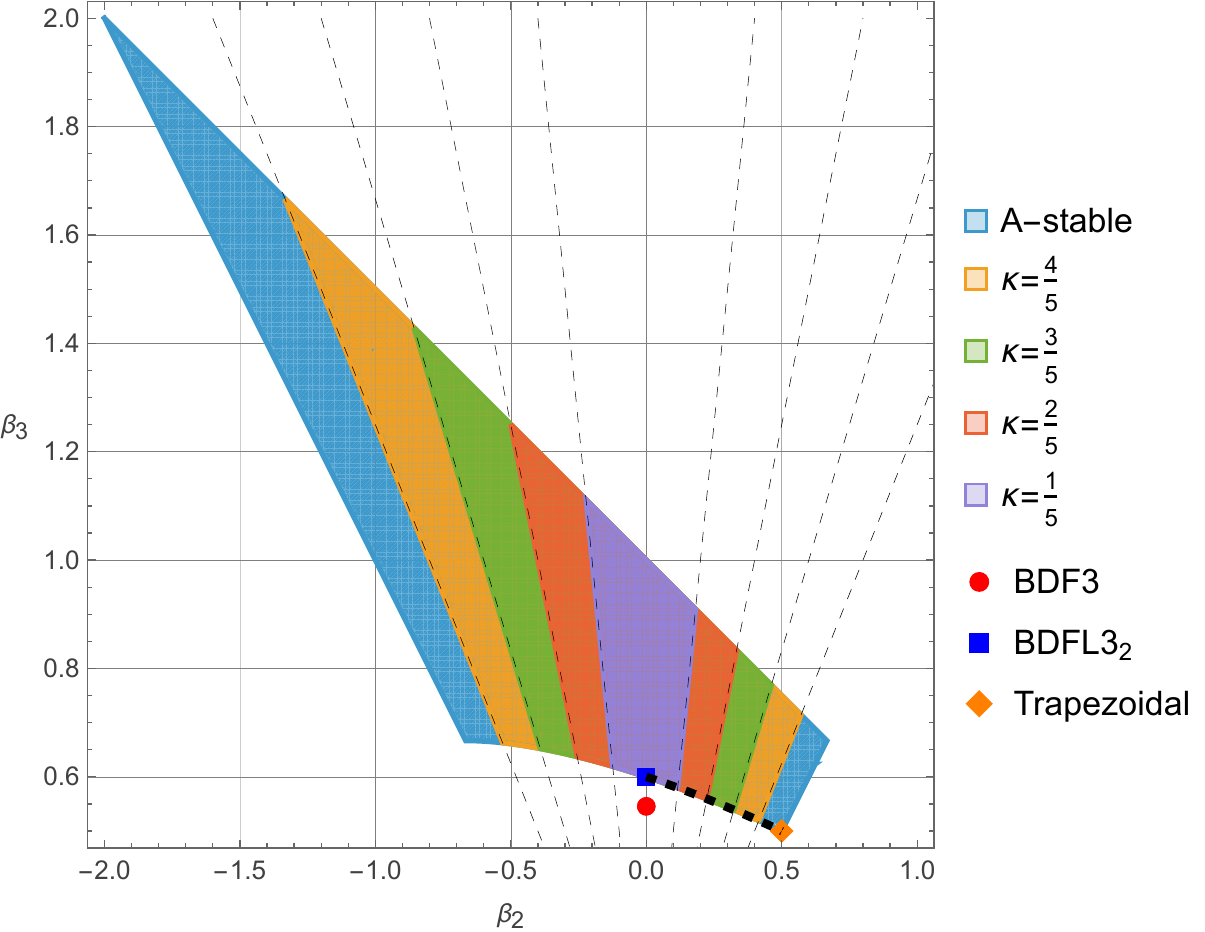}
    \caption{The set of irreducible $3$-step $2^\text{nd}$-order A-stable  and L$(\kappa)$-stable ILMMs in the $(\beta_2,\beta_3)$ parameter plane when $\beta_0=\beta_1=0$ for some values of $\kappa$, see \eqref{AstabirredinFamilybeta0beta1ZERO} and \eqref{kappastabinFamilybeta0beta1ZERO}.
    The classical BDF3 and the trapezoidal method (embedded in this family as a reducible 3-step method in the sense of Remark \ref{rem:1stepembedding}) are also displayed for comparison. The optimal BDFL3$_2$ method (Table \ref{tab:bdfl_345}) is on the boundary and in the set, the trapezoidal method is on the boundary but not in the set, and the BDF3 method lies outside the set. The dashed gray rays emanating from the origin form the boundaries of the sectors given by \eqref{kappastabinFamilybeta0beta1ZERO}. The dashed black curve on the lower right boundary of the set 
    represents the optimal L$(\kappa)$-stable methods, see Corollary \ref{cor:beta0beta1ZEROoptimalmethods}.
    }
    \label{fig:a_stable_b0b1}
\end{figure}

 The modulus of the error constant is 
\begin{equation}\label{beta0beta1ZEROCvalue}
    |C|=\frac{1}{6} \left| \frac{2 \beta _2+11 \beta _3-6}{\beta _2+\beta _3}\right|.
\end{equation}

In the following, we identify the optimal L$(\kappa)$-stable methods; some of them will be presented in Table \ref{tab:lk_1234}.


\begin{cor}\label{cor:beta0beta1ZEROoptimalmethods}
    Let us assume $\beta_0=\beta_1=0$, and choose some $\kappa\in[0,1)$. Then the optimal irreducible $3$-step $2^\text{nd}$-order A-stable and  L$(\kappa)$-stable ILMM  
    has 
    \begin{equation}\label{corollary51Copt}
    |C|_\text{opt}=\frac{3-\kappa }{6 (\kappa +3)}
    \end{equation}
    with coefficients
    \[
       (\alpha_0,\alpha_1,\alpha_2,\alpha_3)=\left(\frac{\kappa -1}{\kappa ^2+5 \kappa +10},\frac{-\kappa ^2-5 \kappa +6}{\kappa ^2+5 \kappa +10},\frac{-\kappa -15}{\kappa ^2+5 \kappa +10},1\right),
    \]
    \begin{equation}\label{corollary51betacoeffs}
       (\beta_0,\beta_1,\beta_2,\beta_3)=\left(0,0,\frac{2\kappa \left(\kappa +3 \right)}{\kappa ^2+5 \kappa +10},\frac{2 (\kappa +3)}{\kappa ^2+5 \kappa +10}\right);
    \end{equation}
    see also Figures \ref{fig:a_stable_b0b1} and \ref{fig:lk_errmin}.
\end{cor}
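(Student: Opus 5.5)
The problem reduces to a two-variable minimization over the explicit region described by \eqref{AstabirredinFamilybeta0beta1ZERO} intersected with the sector \eqref{kappastabinFamilybeta0beta1ZERO}, namely $|\beta_2|\le\kappa|\beta_3|$. Since every A-stable point has $\beta_3\in[\frac12,2]$, we have $\beta_3>0$ and the L$(\kappa)$ constraint becomes $-\kappa\beta_3\le\beta_2\le\kappa\beta_3$. On this set the objective \eqref{beta0beta1ZEROCvalue} is a linear-fractional function of $(\beta_2,\beta_3)$. Its level sets $\{|C|=c\}$ are therefore straight lines through the point where numerator and denominator vanish simultaneously, i.e.\ $2\beta_2+11\beta_3=6$ and $\beta_2=-\beta_3$, giving $(\beta_2,\beta_3)=(-\frac23,\frac23)$. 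Consequently, the minimum over a closed region, if attained, lies on the boundary. Typically it is at a vertex, or at a point of tangency with the curved part $\beta_2=\frac12(2-5\beta_3)+\frac12\sqrt{-15\beta_3^2+4\beta_3+4}$.

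First, I identify the candidate. The ray $\beta_2=\kappa\beta_3$ meets the A-stability boundary. Substituting $\beta_2=\kappa\beta_3$ into $|C|$ gives $\frac16\left|\frac{(2\kappa+11)\beta_3-6}{(\kappa+1)\beta_3}\right|$, which depends on $\beta_3$ alone and is monotone in $\beta_3$ along the ray. I then find where the ray exits the A-stable set. The formula in \eqref{corollary51betacoeffs} suggests that the relevant boundary piece is the curved one for $\beta_3<\frac23$, or possibly the line $\beta_2=2-2\beta_3$. The claimed point has $\beta_3=\frac{2(\kappa+3)}{\kappa^2+5\kappa+10}$, and one checks that it satisfies the curved-boundary equation with $\beta_2=\kappa\beta_3$. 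Squaring $2\beta_2-2+5\beta_3=\sqrt{-15\beta_3^2+4\beta_3+4}$ with $\beta_2=\kappa\beta_3$ gives a quadratic in $\beta_3$ with roots $0$ and $\frac{2(\kappa+3)}{\kappa^2+5\kappa+10}$. Plugging this value into $|C|$ yields $\frac{3-\kappa}{6(\kappa+3)}$. The $\alpha$-coefficients then follow from \eqref{beta0beta1ZEROconsistency} by direct substitution.

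Next, I prove optimality. The plan mirrors the proof of Corollary~\ref{cor:k=2Lkappaoptimalmethods}: I let \texttt{Reduce} decide the statement that, for $\kappa\in[0,1)$, the conjunction of \eqref{AstabirredinFamilybeta0beta1ZERO}, $|\beta_2|\le\kappa\beta_3$, and $|C|\le\frac{3-\kappa}{6(\kappa+3)}$ has the unique solution \eqref{corollary51betacoeffs}. To make this robust, I also sketch a hand argument. On the two pieces of the A-stable region I compute the sign of $\partial|C|/\partial\beta_2$ (at fixed $\beta_3$), which is governed by $\frac{\partial}{\partial\beta_2}\frac{2\beta_2+11\beta_3-6}{\beta_2+\beta_3}=\frac{6-9\beta_3}{(\beta_2+\beta_3)^2}$. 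This shows that $C$ is monotone in $\beta_2$ with a sign switching at $\beta_3=\frac23$. It pushes the minimizer onto $\beta_2=\kappa\beta_3$ or onto the lower boundary in each strip. Then a one-variable minimization along each boundary arc (the ray, the curved arc, and the segment $\beta_2=2-2\beta_3$) finishes the argument, using the sign of $C$ to remove the absolute value.

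Finally, irreducibility at the optimum is checked directly: $\beta_2+\beta_3\ne0$ and $\beta_2+3\beta_3\ne2$ for $\kappa\in[0,1)$. The case $\kappa=0$ is checked separately and recovers BDFL$3_2$ with $\beta_3=\frac35$ and $|C|=\frac16$, consistent with Section~\ref{sec:bdflikek3}. I expect the main obstacle to be the global comparison across the non-convex region with its curved boundary, in particular excluding lower values near the excluded irreducibility lines and on the $\beta_3>\frac23$ strip. I would rely on \texttt{Reduce} for this, supported by the monotonicity reduction above.
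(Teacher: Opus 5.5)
Your proposal is correct, and its decisive step---having \texttt{Reduce} certify that, for $\kappa\in[0,1)$, the conditions \eqref{AstabirredinFamilybeta0beta1ZERO}, $|\beta_2|\le\kappa\beta_3$ and $|C|\le\frac{3-\kappa}{6(\kappa+3)}$ admit only the point \eqref{corollary51betacoeffs}, with $\kappa=0$ handled separately---is exactly the paper's proof. The only difference is how the candidate is found: the paper conjectured \eqref{corollary51Copt} from \texttt{Minimize} runs at sampled values of $\kappa$, whereas you derive it by intersecting the ray $\beta_2=\kappa\beta_3$ with the curved A-stability boundary; in addition, your monotonicity sketch can be completed by hand, since $6|C|=2+\frac{9\beta_3-6}{\beta_2+\beta_3}$ gives $|C|>\frac13$ on the whole strip $\beta_3>\frac23$, while on the curved arc $6|C|=2-\frac{6\sqrt{u}}{\sqrt{u}+\sqrt{5\beta_3+2}}$ with $u=2-3\beta_3$, which increases with $\beta_3$.
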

\begin{proof} 
 We let \texttt{Minimize} run for several hours but---unlike in the case of \eqref{2stepOptimalvalue}---it could not determine the optimal value of $|C|$ when $\kappa$ was present as a parameter. So we sampled some suitable concrete values of $\kappa\in(0,1)$, and now \texttt{Minimize} quickly returned an answer. Based on these data points, we conjectured \eqref{corollary51Copt}. To actually prove Corollary \ref{cor:beta0beta1ZEROoptimalmethods}, we use \texttt{Reduce}, which shows that the system formed by the inequalities 
 \begin{itemize}
     \item $0<\kappa<1$, \eqref{AstabirredinFamilybeta0beta1ZERO},
     \item $\left| \beta _2\right| \le \kappa  \left| \beta _3\right|$ (see \eqref{kappastabinFamilybeta0beta1ZERO}), and 
     $\frac{1}{6} \left| \frac{2 \beta _2+11 \beta _3-6}{\beta _2+\beta _3}\right|\le \frac{3-\kappa }{6 (\kappa +3)}$ (see \eqref{beta0beta1ZEROCvalue})
 \end{itemize}
   has a unique solution $(\beta_2,\beta_3)$ given by \eqref{corollary51betacoeffs}. The remaining coefficients are then obtained from \eqref{beta0beta1ZEROconsistency}.
The case $\kappa=0$ is checked separately; as expected, we recover the BDFL$3_2$ method presented in Table \ref{tab:bdfl_345}.
We remark that for $\kappa=1$ we would get a reducible LMM  equivalent to the trapezoidal method. 
\end{proof}

\subsubsection{Case 2: $\beta_0=\beta_2=0$}\label{sec:beta0beta2ZERO}


 Using the consistency and normalization conditions, we express the remaining method coefficients with $(\beta_1,\beta_3)$.

 When applying Algorithm \ref{Astabilitydeterminationalg} directly---either with the Weierstrass substitution or with the cosine substitution \eqref{simplersubstitution}---we could not obtain a description of the set of A-stable  methods $\mathcal{A}$ with \texttt{Reduce} in a reasonable amount of computing time. Therefore, we first try and employ  some necessary conditions again. Let \texttt{rationalFunction} denote the function obtained by the cosine substitution; its numerator is a quintic polynomial in $r$ and a cubic polynomial in $c$, while its denominator is quintic in $r$ and quadratic in $c$ (with the Weierstrass substitution, the corresponding $(r,t)$ degrees in the numerator and denominator would be $(5,6)$ and $(5,6)$). Instead of requiring the positivity of \texttt{rationalFunction} for any $r>1$ and $c\in[-1,1]$, we just require 
\[
\texttt{rationalFunction}>0 \quad \forall r>1\quad\text{and}\quad c\in\{-1,0,1\}
\]
and
\[
\texttt{rationalFunction}\ge 0 \quad \text{for } r=1\quad\text{and}\quad \forall c\in[-1,1].
\]
\texttt{Reduce} then eliminates the quantifiers and yields some non-linear inequalities in $(\beta_1,\beta_3)$; let $\mathcal{P}$ denote the intersection of the corresponding regions. The set $\mathcal{P}$ has a simple description, and 
we have $\mathcal{A}\subseteq\mathcal{P}\subseteq [-1,1]\times\left[\frac{1}{2},1\right]$ in the $(\beta_1,\beta_3)$ plane.
Now, interestingly, it turns out that these necessary conditions are also sufficient: \texttt{Reduce} establishes (within a few minutes) that the system formed by the inequalities describing $\mathcal{P}$,  $r>1$, $-1\le c\le 1$,  and \texttt{rationalFunction}$\,\le 0$ has no solution. Therefore, $\mathcal{A}=\mathcal{P}$, see Figure \ref{fig:supersetandminerror_b0b2}.

 The second characteristic polynomial $\sigma$ admits a simple factorization, so  
\begin{equation}\label{kappastabinFamilybeta0beta2ZERO}
\text{the LMM is L}(\kappa)\text{-stable for some  }\kappa\ge 0\quad \Longleftrightarrow \quad \left| \beta _1\right| \le \kappa^2  \beta _3.
\end{equation}

 Finally, it is easily seen that, for fixed $\kappa\in[0,1)$, the error constant $|C|$ attains its minimum value at the lower curved boundary of $\mathcal{A}$ for $\beta_1\in[-1/2,0]$ 
(see Figure \ref{fig:supersetandminerror_b0b2} again). In this way, we have obtained the corresponding graph in Figure \ref{fig:lk_errmin}, describing the optimal error constant 
$|C|_\text{opt}=\frac{3-\kappa^2 }{6 (\kappa^2 +3)}$
 (cf.~\eqref{corollary51Copt}).

\subsubsection{Case 3: $\beta_1=\beta_2=0$}

Using a similar analysis as above, we have found that the optimal irreducible $3$-step $2^\text{nd}$-order A-stable and  L$(\kappa)$-stable ILMMs  
 have the common error constant $|C|_\text{opt}=1/6$ for any $\kappa\in[0,1]$, see Figure \ref{fig:lk_errmin}.

\begin{figure}
     \centering
     \begin{subfigure}[b]{0.54\textwidth}
        \centering
    \includegraphics[width=1\linewidth]{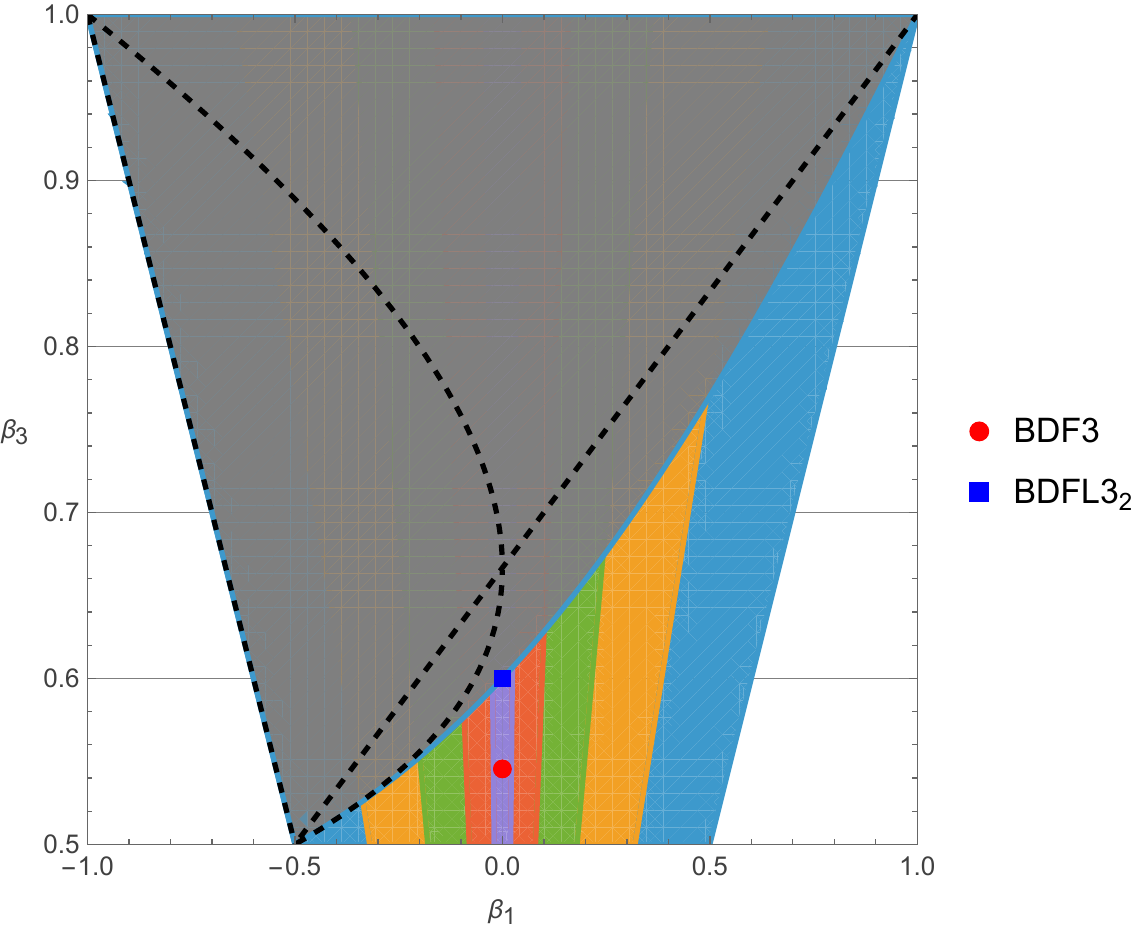}
     \end{subfigure}
     \begin{subfigure}[b]{0.44\textwidth}
         \centering
         \includegraphics[width=1\linewidth]{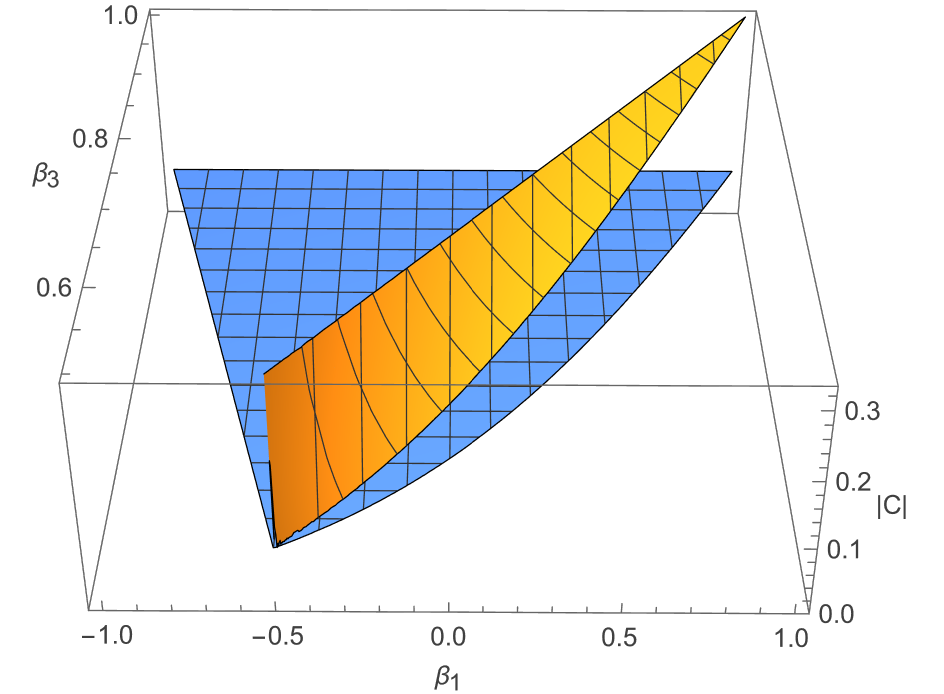}
     \end{subfigure}
     
     \caption{The set of 3-step A-stable LMMs when $\beta_0=\beta_2=0$ is colored gray in the left figure. Reducible methods are denoted by the dashed black curves. The sets of L$(\kappa)$-stable methods are shown in various colors for some values of $\kappa\in(0,1)$---as $\kappa$ decreases, the overlapping wedges become narrower, see \eqref{kappastabinFamilybeta0beta2ZERO}. The yellow surface in the right figure shows the error constant $|C|$ (values greater than $1/3$ are not displayed), while the blue constant surface represents the theoretical minimum at height $1/12$.}
     \label{fig:supersetandminerror_b0b2}
\end{figure}



\begin{figure}
    \centering
    \includegraphics[width=0.75\linewidth]{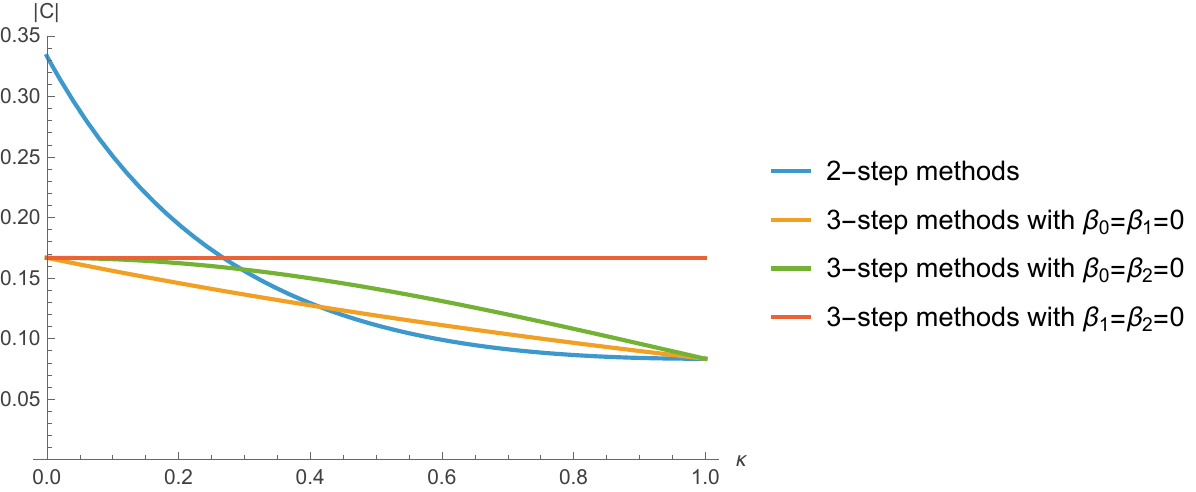}
    \caption{The smallest error constants \eqref{errorconstantmodulustobeminimized} in the family of $2$-step and in some families of $3$-step irreducible $2^\text{nd}$-order A-stable and L$(\kappa)$-stable ILMMs 
    as a function of $\kappa$, summarizing our findings in Sections \ref{sec:k=2strictlyLkappamethods}--\ref{sec:k=3someLkappamethods}. 
    At the left endpoint $\kappa=0$, the values are given by the classical BDF2 and the optimal BDFL$3_2$ methods ($1/3$ and $1/6$, respectively), while at the right endpoint $\kappa=1$ by the trapezoidal method ($1/12$). 
    }
    \label{fig:lk_errmin}
\end{figure}

\subsection{Some optimized methods}
In Table \ref{tab:lk_1234}, we summarize the properties of some optimal two- and three-step methods found in Sections \ref{sec:k=2strictlyLkappamethods}--\ref{sec:k=3someLkappamethods}.  For methods of a given step number, we clearly see a tradeoff between reducing $\kappa$ and reducing $|C|$.

\begin{table}
\centering
\renewcommand{\arraystretch}{1.4}
\begin{tabularx}{\textwidth}{c|X|c}
\textbf{$\kappa$ value} & \textbf{LMM} & \textbf{Error constant $C$} \\
\hline
$\frac{1}{5}$ &
$y_{n+3}=\frac{95}{69} y_{n+2}-\frac{31}{69}y_{n+1}+\frac{5}{69}y_n+\frac{h}{69} \left(8 f(y_{n+2})+40
   f(y_{n+3})\right)$ &
$-\frac{7}{48}\approx -0.14583$\\[6pt]
\hline
$\frac{2}{5}$ &
$y_{n+3}=\frac{385}{304} y_{n+2}-\frac{6}{19}y_{n+1}+\frac{15}{304}y_n+$  $h \big(\frac{17}{76} f(y_{n+2}) +\frac{85}{152}
   f(y_{n+3})\big)$ &
$-\frac{13}{102}\approx -0.12745$\\[6pt]
\hline
$\frac{3}{5}$ &
$y_{n+2}=\frac{2}{3}y_{n+1}+\frac{1}{3}y_n+h \big(\frac{3}{16} f(y_n)+\frac{5}{8} f(y_{n+1})+\frac{25}{48}
   f(y_{n+2})\big)$ &
$-\frac{19}{192}\approx -0.09896$ \\[6pt]
\hline
$\frac{4}{5}$ &
$y_{n+2}=\frac{4}{11} y_{n+1}+$ $\frac{7}{11}y_n+$ $\frac{h}{99} \big(32 f(y_n)+80 f(y_{n+1})+50
   f(y_{n+2})\big)$ &
$-\frac{7}{81}\approx -0.08642$\\
\end{tabularx}
\caption{Selected irreducible $2^\text{nd}$-order A-stable and L$(\kappa)$-stable methods and their associated error constants (see Corollaries
\ref{cor:k=2Lkappaoptimalmethods} and \ref{cor:beta0beta1ZEROoptimalmethods}).
For each value of $\kappa$, the corresponding method has the
smallest attainable value of the error constant $|C|$ among the classes of methods studied herein.
For $\kappa\in\{1/5, 2/5\}$, the optimal methods have three steps, while for $\kappa \in \{3/5,4/5\}$ the optimal methods
use only two steps.
}
\label{tab:lk_1234}
\end{table}

\section{Numerical tests} \label{sec:tests}
In this section, we test the performance of some of our optimal BDFL and L($\kappa$)-stable LMMs. 
For each method,
we use the diagonally implicit $2^\text{nd}$-order and L-stable one-step TR-BDF2 method \cite[Section 8.5]{leveque2007finite} to generate the $k-1$ starting values. 
For problems with no known analytical solution,
we compute errors with respect to a reference solution generated by Python's 

{\centering \verb|scipy.integrate.solve_ivp|\par}
\noindent using \verb|Radau| with \verb|rtol=1e-10|, \verb|atol=1e-12|.


\subsection{Prothero--Robinson problem} \label{subsec:protherorobinson}
We begin with the classical Prothero--Robinson
test problem 
\cite[Example 8.1]{leveque2007finite}
\begin{align} \label{ode_example1}
    y'(t)=\lambda(y(t)-\cos(t))-\sin(t)
\end{align}
with analytical solution
\begin{align*} 
    y(t)=(y(t_0)-\cos(t_0))e^{\lambda(t-t_0)}+\cos(t).
\end{align*}
We take $\lambda:=-10^6$, $t_0:=0$ and $y(t_0):=1.5$. 
The exact solution
\begin{align} \label{ode_final_solution1}
    y(t)=\frac{1}{2}{e^{-10^6t}}+\cos(t).
\end{align}
For a time step $h=0.1$, the exact solution decays
almost completely to the slow manifold in a single
time step. 
Thus, damping of very fast modes is essential for this problem.

We first test the L-stable BDFL methods.
Errors are plotted in Figure \ref{fig:ode1_err}, where
we see that after the initial transient the error for
each method is roughly proportional to its error constant (see Table \ref{tab:bdfl_345}), with the optimized methods outperforming BDF2.  It is interesting to note that each method shows a much larger error at two instants: step 1 and step $k$.  These are the two points at which the numerical method makes use of the initial data, and the error is orders of magnitude larger at those points simply because that value is far from the slow manifold.

\begin{figure}
    \centering
    \includegraphics[width=0.75\linewidth]{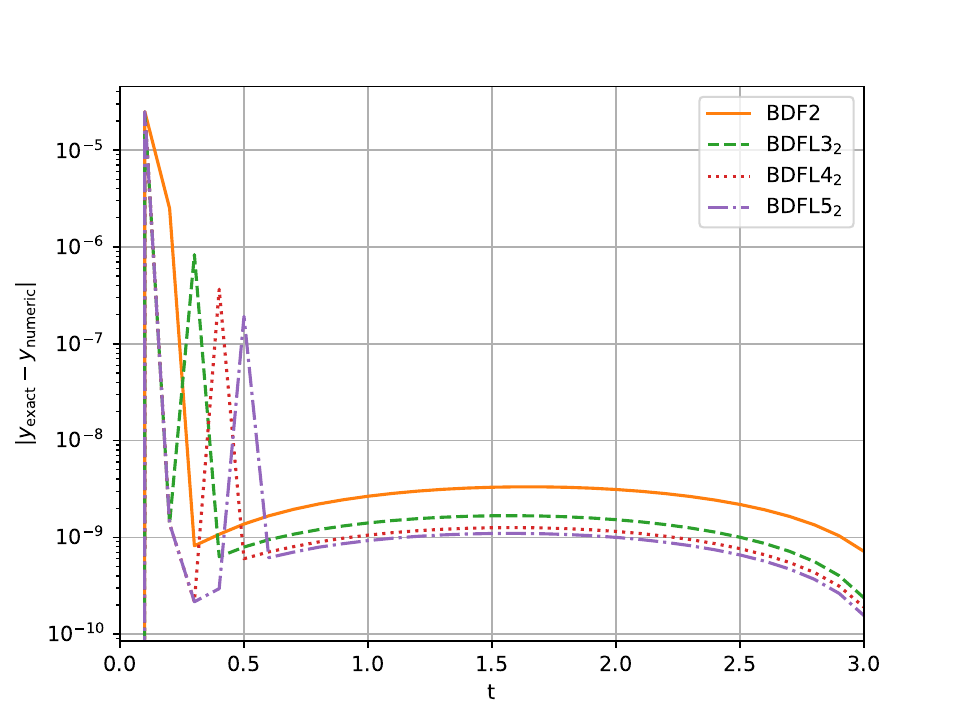}
    \caption{Absolute value
    of the difference between the exact Prothero--Robinson solution \eqref{ode_final_solution1} and 
    the numerical solutions obtained by various LMMs. 
     Note that the second spike in error
    for each LMM corresponds to the transition from the one-step TR-BDF2 method
    to the BDF2 and optimal BDFL methods (Table \ref{tab:bdfl_345}). 
    }
    \label{fig:ode1_err}
\end{figure}

Next we test our optimized L$(\kappa)$-stable methods.
In Figures \ref{fig:lk_perf}, we see very clearly the effect of the asymptotic damping factor; methods with smaller values of $\kappa$ are able to more rapidly damp the initial oscillations.  For the trapezoidal rule with $\kappa=1$, the damping is insignificant on the time scale displayed here.

Errors are plotted in Figure \ref{fig:lk_err}, where for each method we see two regimes: a rapidly-decaying error phase as the oscillations are damped, and then almost-periodic variation of the error once the solution is near the slow manifold.  Interestingly, even the methods with intermediate values of $\kappa$ lead to smaller errors than that of BDF2 once the oscillations are sufficiently damped.

\begin{figure}
    \centering
    \includegraphics[width=0.95\linewidth]{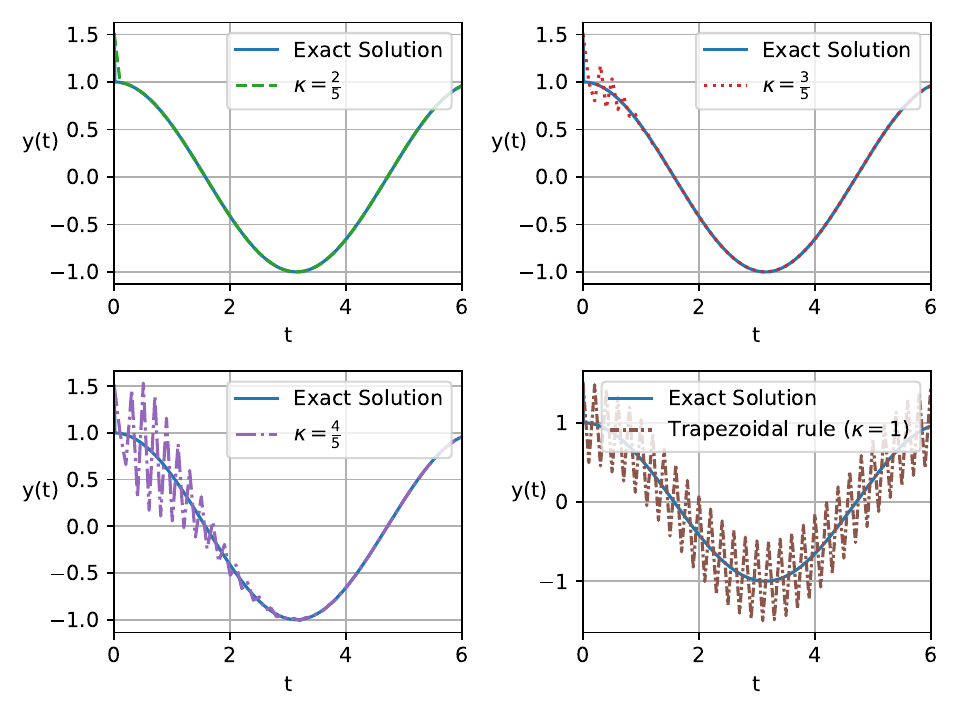}
    \caption{Numerical solutions to the Prothero--Robinson test problem \eqref{ode_example1} using the L$(\kappa)$-stable methods in Table \ref{tab:lk_1234} (the figure with $\kappa=1/5$ is essentially identical to the one with $\kappa=2/5$ hence omitted). Despite having a lower $|C|$, a method with a larger $\kappa$ damps the initial deviation less efficiently, as expected from Definition \ref{Lkappadef}. 
    }
    \label{fig:lk_perf}
\end{figure}
\begin{figure}
    \centering
    \includegraphics[width=0.75\linewidth]{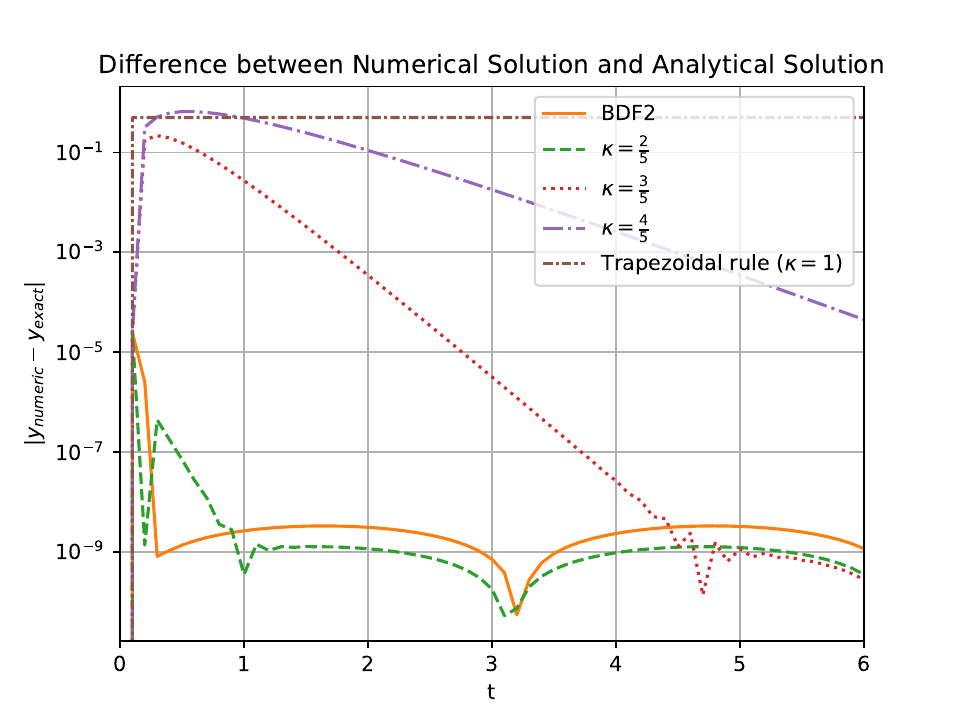}
    \caption{Error from the Prothero--Robinson solution \eqref{ode_final_solution1}. 
    It is evident here that once the initial oscillations of the LMM are sufficiently damped, its performance improves beyond that of BDF2.}
    \label{fig:lk_err}
\end{figure}

\subsection{Van der Pol oscillator} \label{subsec:vanderpolfixed}
Next we consider the nonlinear van der Pol equation \cite[Section IV.1]{wanner1996solving},
written as a first-order system:
\[
\begin{alignedat}{2}
    &y_1'=y_2 \qquad && y_1\left(0\right)=2\\
    &y_2'=\mu\left(1-y_1^2\right)y_2-y_1 \qquad && y_2\left(0\right)=0
\end{alignedat}
\]
with $\mu=500$.  We solve up to time $t=2000$
with each of the optimal BDFL methods (Table \ref{tab:bdfl_345}) with $h=10^{-3}$.  
A reference solution is obtained by the \verb|Radau| method via \verb|scipy.integrate.solve_ivp|.
Solutions and phase errors are shown in Figure \ref{fig:vanderpolsolns}.
It is evident that, with this step size, all methods dramatically lose accuracy after less than one period.

In order to overcome this, one would need to take an extremely
small step size.  With our proposed methods, this would make
the solution very costly.  A more efficient approach is the use of adaptive step size methods, which we develop in the next section.

\begin{figure}
    \centering
    \includegraphics[width=0.75\linewidth]{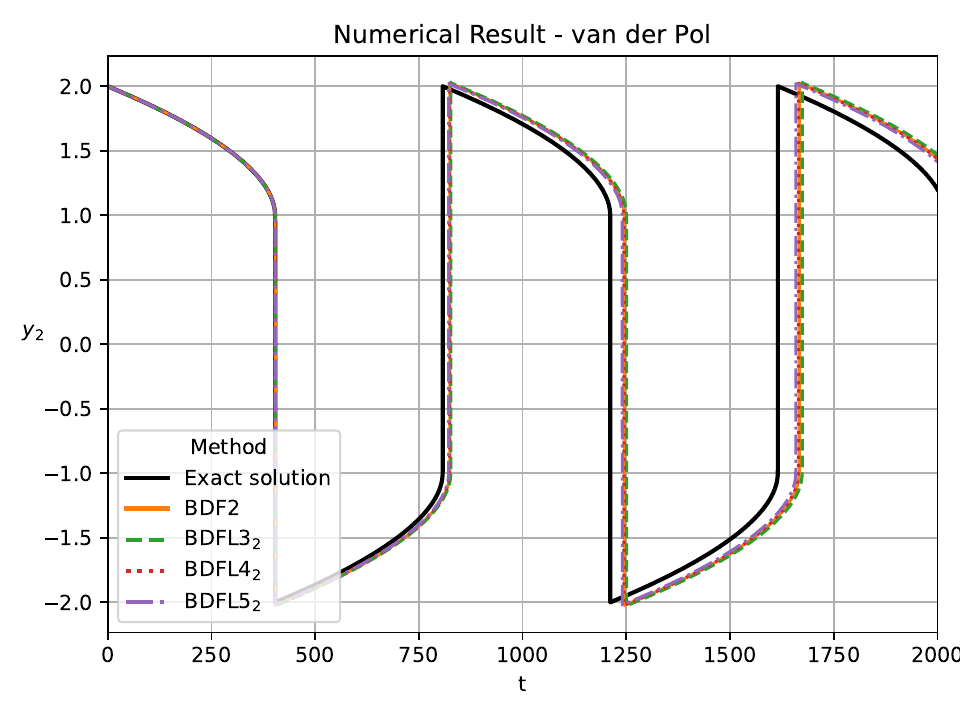}
    \caption{Numerical solution of the van der Pol oscillator with fixed step size methods}
    \label{fig:vanderpolsolns}
\end{figure}

\section{Variable-step-size formulation} \label{sec:vss}
Efficient application of LMMs to most problems of interest requires step-size adaptation.  In this section, we develop a proof-of-concept 
variable-step-size (VSS) extension for each of the optimal BDFL$k_2$ methods.
The goal here is only to show that a reasonably efficient VSS extension is possible.
A more sophisticated design could involve
trying to minimize the error in some sense for a range of 
step-size ratios, and employ a more advanced step-size controller, but that is left to future work.

\subsection{Setup for VSS}
Our VSS methods make use of coefficients that depend on the step size; we therefore write the methods in the form
\begin{align} \label{lmmvariablestep0}
    y_{n+k}+\sum^{k-1}_{j=0}\alpha_{j,n+k-1}y_{n+j}=h_{n+k-1}\sum^k_{j=0}\beta_{j,n+k-1}f_{n+j}.
\end{align}
For the variable-step-size analysis it is convenient to index a step by the interval $h_n=t_{n+1}-t_n$ ending at its new solution value. Thus, replacing $n$ by $n+1-k$ in \eqref{lmmvariablestep0}, and indexing the variable coefficients accordingly, we write
\begin{align}\label{lmmvariablestep}
\sum_{j=0}^k \alpha_{j,n}y_{n+1-k+j}
=
h_n\sum_{j=0}^k\beta_{j,n}f_{n+1-k+j},
\qquad \alpha_{k,n}=1.
\end{align}
The local truncation error for \eqref{lmmvariablestep} is given by
\begin{align} \label{ltevariablestep}
    \tau_n & = \frac{1}{h_n} \left(\sum_{j=0}^k \alpha_{j,n}\right) y(t_{n+k-1}) + \sum_{p=1}^\infty h_n^{p-1} \sum_{j=0}^k \left( \frac{c_{j,n}^p}{p!}\alpha_{j,n} - \frac{c_{j,n}^{p-1}}{(p-1)!}\beta_{j,n} \right)y^{(p)}(t_{n+k-1}),
\end{align}
where
$$
c_{j,n}:=\frac{t_{n+1-k+j}-t_{n+1-k}}{h_n} = \frac{1}{h_n} \sum_{i=1}^j h_{n-k+i}.
$$
Of course, \eqref{ltevariablestep} reduces to
\eqref{localtruncationerror} in the case of constant step size. From \eqref{ltevariablestep} we obtain the conditions for second order:
\begin{subequations} \label{consistencyvariable}
\begin{align}
    \sum_{j=0}^k \alpha_{j,n} & = 0 \\
    \sum_{j=0}^k \left(c_{j,n} \alpha_{j,n} - \beta_{j,n}\right) & = 0 \\
    \sum_{j=0}^k \left(\frac{1}{2}c_{j,n}^2 \alpha_{j,n} - c_{j,n} \beta_{j,n}\right) & = 0.
\end{align}
\end{subequations}
In what follows, it is convenient to work with the step size ratios $\omega_n:=\frac{h_n}{h_{n-1}}$. In terms of these, we have
\[
    c_{j,n} = \sum_{i=1}^j \prod_{\ell=n-k+i+1}^{n} \frac{1}{\omega_\ell}.
\]

If we consider a class of methods with only three free coefficients, then \eqref{consistencyvariable} is enough to fully determine the (adaptive) method.  In this manner, one can obtain
for instance a variable-step-size BDF2 method \cite[Section III.5]{hairer1993solving}. A similar exercise can be applied to derive the implicit Adams method ($k=2$) found in the same section and the BDF3 and BDF4  methods found in \cite{decaria2021variable}.

\subsection{Adaptive BDFL methods}\label{sec:adaptiveBDFmethods}
However, for our optimal BDFL methods, the number of free coefficients is greater than the number of consistency conditions.  We initially attempted to choose the adaptive coefficients in a way that would minimize the error constant for some distribution of step size ratios, but this problem proved intractable.  Here we instead eliminate the extra degrees of freedom by setting some of the coefficients to their fixed step size values, and then solve the consistency conditions to determine the remaining coefficients.

For instance, to find an adaptive version of BDFL$3_2$, we use the structural conditions $\beta_{0,n}=\beta_{1,n}=\beta_{2,n}=0,\alpha_{3,n}=1$, and additionally set $\alpha_{2,n}=-\frac{3}{2}$.  We then solve \eqref{consistencyvariable} for $\alpha_{0,n}, \alpha_{1,n}$, and $\beta_{3,n}$.  The resulting method is
\begin{align*}
    \alpha_{0,n} &= \frac{\omega _{n-1}^2 \left(2 \omega _n^2-2 \omega _n-1\right)}{4 \omega _{n-1} \left(\omega _n+1\right)+2} \\
    \alpha_{1,n}&=\frac{\left(-2 \omega _n^2+2
   \omega _n+1\right) \omega _{n-1}^2+2 \left(\omega _n+1\right) \omega _{n-1}+1}{4 \omega _{n-1} \left(\omega
   _n+1\right)+2} \\
    \alpha_{2,n}&=-\frac{3}{2} \\
    \alpha_{3,n}&=1 \\
    \beta_{3,n}&=\frac{2 \omega_n+\omega_{n-1} \left(2 \omega _n^2+4 \omega _n-1\right)-1}{2 \omega _n \left(2 \omega _{n-1} \left(\omega_n+1\right)+1\right)}.
\end{align*}

To determine adaptive versions of
BDFL$4_2$ and BDFL$5_2$, we additionally fixed the values of $\alpha_{2,n}, \alpha_{3,n}, \alpha_{4,n}$, and (for BDFL$5_2$) $\alpha_{5,n}$.
The coefficients of the resulting variable-step-size LMMs are given in Appendix \ref{varstepbdfl45coeffs}.


\subsection{Stability} \label{subsec:vss_stable_regions}
Stability of adaptive methods is much more complicated than for fixed step size methods \cite{geart1974effect,grigorieff1983stability,soderlind2018zero,hadjimichael2016strong}; see \cite[Section III.5]{hairer1993solving} for an overview.  
Here, rather than studying products of varying-coefficient companion matrices or assuming smooth step-size variation,
we undertake a frozen-coefficient stability analysis: for fixed values of the recent step-size ratios, we determine whether the corresponding characteristic polynomial satisfies the root condition. This provides a practical indication of admissible step-size variations, although it does not characterize zero-stability for arbitrary sequences of varying step ratios.


For BDFL$3_2$, we can determine the exact set of step size ratios
$(\omega_n, \omega_{n-1})$ for which this frozen-coefficient root condition holds:
\begin{align*}
&
\Biggl(
0 < \omega_n < \frac{1+\sqrt{3}}{2}
\;\land\;
\Biggl(
    \frac{1}{8}
    \left(
        \frac{\omega_n+1}{-2\omega_n^2+2\omega_n+1}
        +
        \sqrt{3}
        \sqrt{
            \frac{-5\omega_n^2+6\omega_n+3}
            {\left(-2\omega_n^2+2\omega_n+1\right)^2}
        }
    \right)
    \le
    \omega_{n-1}
\\[1ex]
&\qquad\qquad\qquad
<
\frac{2(\omega_n+1)}
{-2\omega_n^2+2\omega_n+1}
+
\sqrt{
    \frac{12\omega_n+6}
    {\left(-2\omega_n^2+2\omega_n+1\right)^2}
} \notag
\\[1ex]
&\qquad\qquad
\lor
\qquad
0<\omega_{n-1}
<
\frac{1}{8}
\left(
    \frac{\omega_n+1}
    {-2\omega_n^2+2\omega_n+1}
    +
    \sqrt{3}
    \sqrt{
        \frac{-5\omega_n^2+6\omega_n+3}
        {\left(-2\omega_n^2+2\omega_n+1\right)^2}
    }
\right)
\Biggr)
\Biggr)\notag
\\[2ex]
&\lor
\left(
2\omega_n=1+\sqrt{3}
\;\land\;
\omega_{n-1}>0
\right) \notag
\\[2ex]
&\lor
\left(
2\omega_n>1+\sqrt{3}
\;\land\;
0<\omega_{n-1}
<
\frac{\omega_n+1}
{2\omega_n^2-2\omega_n-1}
+
\sqrt{3}
\sqrt{
\frac{\omega_n^2}
{\left(-2\omega_n^2+2\omega_n+1\right)^2}
}
\right).\notag
\end{align*}
This set is plotted in Figure \ref{fig:bdfl3varstable}.
\begin{figure}[H]
    \centering
    \includegraphics[width=0.5\linewidth]{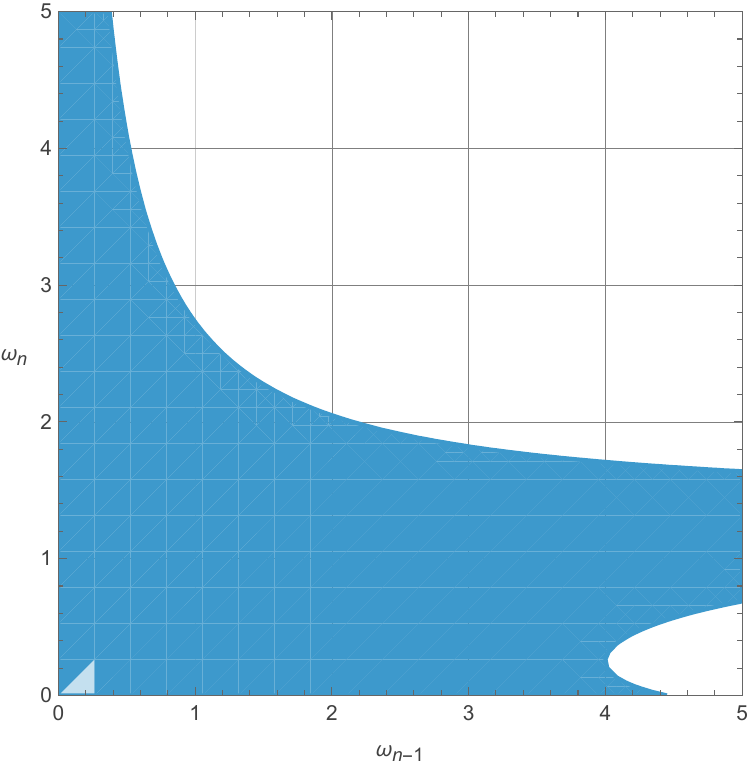}
    \caption{Set of step-size ratios satisfying the frozen-coefficient root condition for variable-step-size BDFL$3_2$.} 
    \label{fig:bdfl3varstable}
\end{figure}

For BDFL$4_2$ and BDFL$5_2$, the (higher-dimensional) stable sets are too complex to characterize or plot, but \cite[Theorem III.5.5]{hairer1993solving} guarantees that they include some neighborhood of the point given by $\omega_j=1$ for all $j$.

\subsection{Error estimation and step-size control} \label{subsec:adaptabless}

For adaptive step-size selection, we estimate the local error using a cubic
Hermite extrapolation. Given the two most recent accepted solution values
$y_{n-1},y_n$ and their corresponding derivatives $f_{n-1},f_n$, we construct
the cubic polynomial $p(t)$ satisfying
\[
p(t_{n-1})=y_{n-1}, \qquad p'(t_{n-1})=f_{n-1},
\]
\[
p(t_n)=y_n, \qquad p'(t_n)=f_n.
\]
We then extrapolate this polynomial to the next time level and define
\[
z_{n+1}=p(t_{n+1}).
\]
Since cubic Hermite extrapolation has error of order $\mathcal{O}(h^4)$,
while the BDFL methods considered here have local solution error of order
$\mathcal{O}(h^3)$, the difference
\[
E_n:=\|y_{n+1}-z_{n+1}\|
\]
provides an asymptotically correct estimate of the leading local error of the
second-order LMM. This estimator requires no additional evaluations of the ODE
right-hand side, since $f_{n-1}$ and $f_n$ are already available from previous
accepted steps. We use it in the standard controller
\[
h_{n+1}
=
\eta h_n
\left(\frac{\mathrm{TOL}}{E_n}\right)^{1/3},
\]
with a safety factor $0<\eta<1$ and upper and lower bounds on the permitted
step-size ratio.


\subsection{Variable-step-size methods: Robertson's problem} \label{subsec:robertson}

We test these methods on two problems.
First, we consider Robertson's problem \cite{robertson1966solution},\cite[Section IV.1]{wanner1996solving}:
\begin{equation} \label{robertson}
\begin{alignedat}{2}
    y'_1&=-0.04 y_1+10^4 y_2 y_3 \qquad && y_1(0)=1\\
    y'_2&=0.04 y_1-10^4 y_2 y_3-(3\times10^7)y_2^2 \qquad \qquad \qquad&& y_2(0)=0\\
    y'_3&=(3\times10^7)y_2^2 \qquad && y_3(0)=0.
\end{alignedat}
\end{equation}
The rate of change of the solution for this problem is known to vary dramatically, with a rapid rate of change at early times and much slower rate at later times.  Therefore, the use of a variable step size is essential to its efficient solution. Due to the known logarithmic structure of the solution, instead of choosing the step size adaptively based on the error, we simply choose a constant ratio between successive steps. This was done using numpy's \verb|logspace| function, where we arbitrarily chose 2000 logarithmically spaced points from $10^{-6} \text{ to } 10^8$ seconds. This results in a step-size ratio of about 1.0162569.

%

A reference solution is obtained by the \verb|Radau| method via \verb|scipy.integrate.solve_ivp|.
Errors are plotted in Figure \ref{fig:robertson_err}, where we see again that the error for each method is roughly proportional to its error constant.

\begin{figure}[H]
    \centering
    \includegraphics[width=0.7\linewidth]{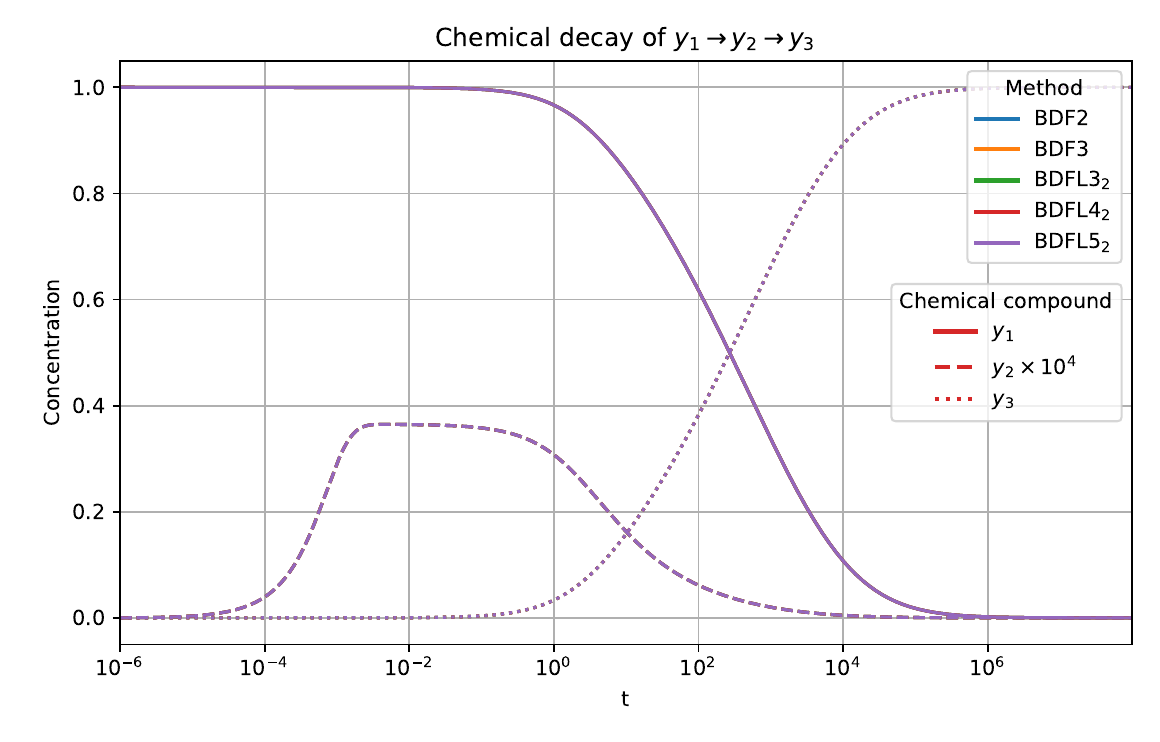}
    \caption{Solution to Robertson's problem \eqref{robertson} generated by BDF2, BDF3 and the optimal BDFL methods (Table \ref{tab:bdfl_345}).  At this scale and with the step sequence used here, all solutions are essentially indistinguishable  from the exact solution.}
    \label{fig:robertson_perf}
\end{figure}
\begin{figure}[H]
         \centering
     \begin{subfigure}[b]{0.75\textwidth}
         \centering
         \includegraphics[width=\linewidth]{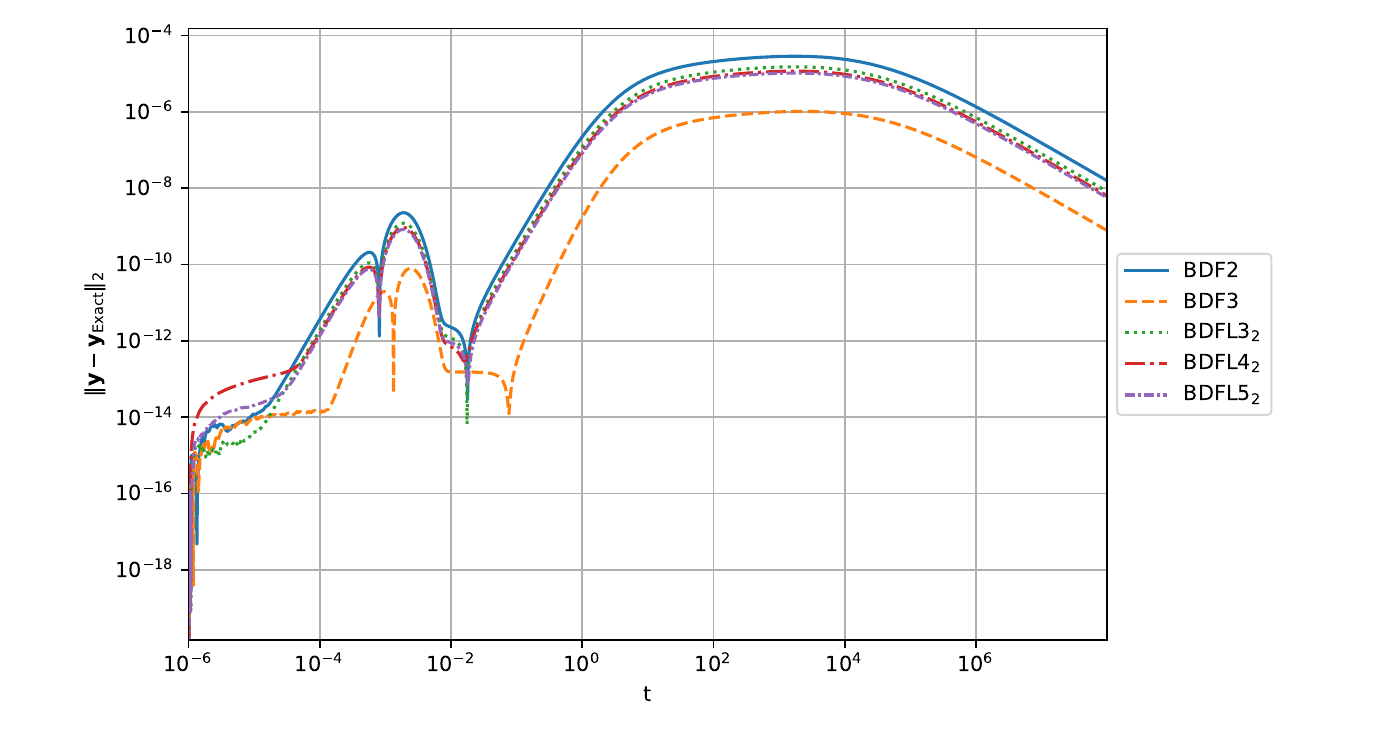}
     \end{subfigure}\\
     \begin{subfigure}[b]{0.75\textwidth}
         \centering
         \includegraphics[width=\linewidth]{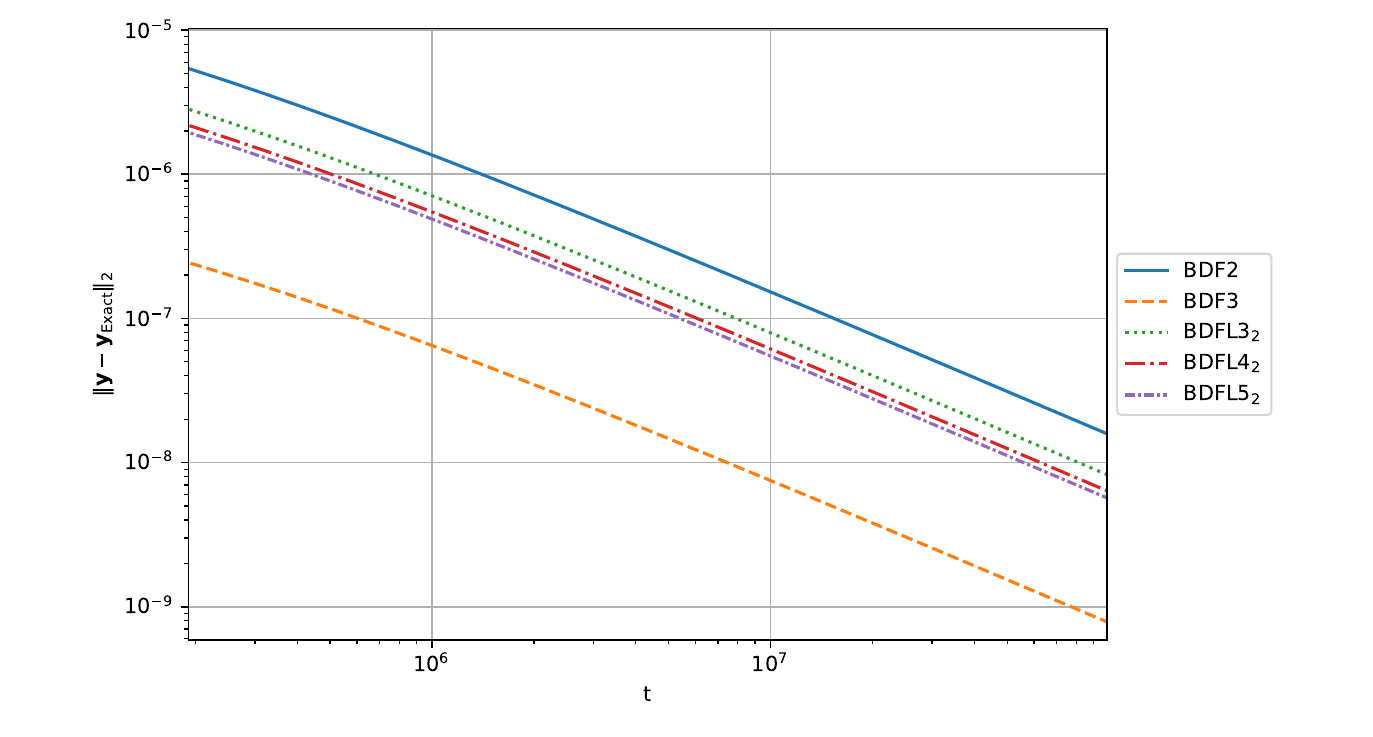}
     \end{subfigure}
    \caption{Norm of the difference between the BDF and BDFL$_2$ methods, and the exact solution generated by \texttt{scipy.integrate.solve\_ivp}. The second figure includes a zoom-in of BDF2, BDF3 and the optimal BDFL methods. Note how we see a similar performance of the optimal BDFL methods as from Figure \ref{fig:ode1_err}.}
    \label{fig:robertson_err}
\end{figure}

\subsection{Van der Pol oscillator revisited} \label{subsec:vanderpolvariable}
Next, we consider again the van der Pol equation,
this time with adaptive step-size control.
We again take $\mu=500$ and solve up to time $t=2000$,
applying our variable-step-size methods and using the step-size-control strategy described
in Section \ref{subsec:adaptabless}, with a tolerance of $10^{-6}$.  The resulting solutions, shown in the right plot of Figure \ref{fig:vanderpolsolns-vss}, remain highly accurate after multiple periods.  Each adaptive method takes about $3.38\times 10^5$ steps to complete the integration, in contrast to the $2 \times 10^6$ steps taken in the 
fixed-step-size case.

\begin{figure}
    \centering
    \includegraphics[width=0.45\linewidth]{Python_files/van_der_pol_performance_mu500.pdf}
    \hfill
    \includegraphics[width=0.45\linewidth]{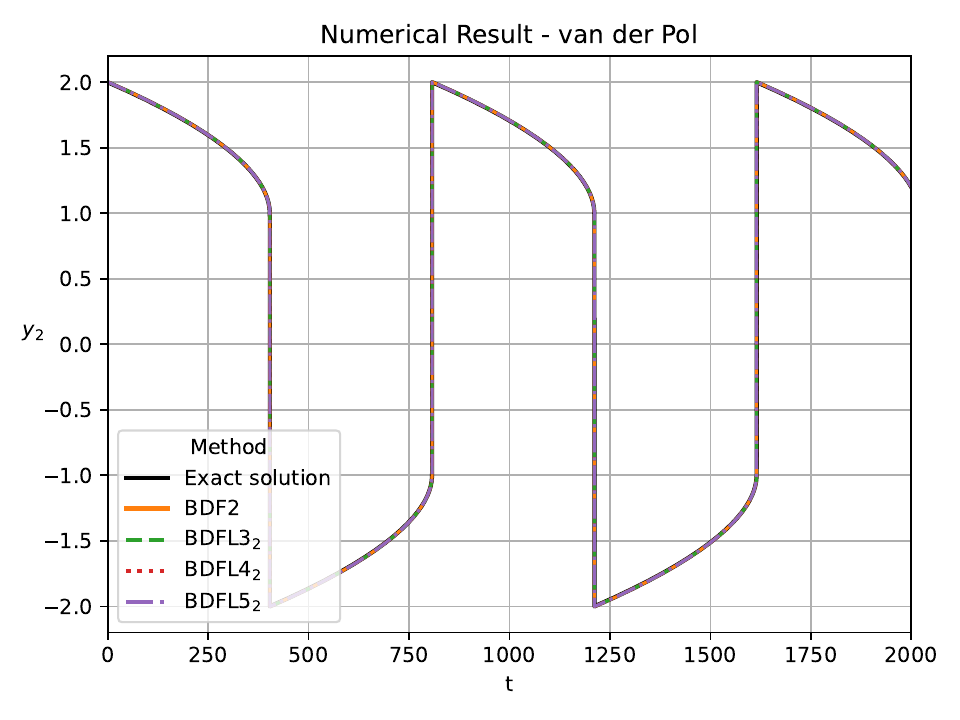} 
    \caption{Numerical solution of the van der Pol oscillator.  Left: fixed-step-size methods.  Right: variable-step-size methods.}
    \label{fig:vanderpolsolns-vss}
\end{figure}

Finally, we conduct a work-precision study to evaluate the efficiency of these methods relative to BDF2.  We solve the problem up to $t=2000$, with prescribed tolerances in the range $[10^{-6},10^{-3}]$ and compute the 
$L^2$
norm of the error over the full time interval.  Compared to BDF2, the proposed methods provide noticeably improved accuracy for the same cost over most of this range.  For the BDFL$_4$ and BDFL$_5$ methods, the fraction of rejected steps is high (as much as 30\%) even at tight tolerances.  We attribute this to the ad hoc manner in which the VSS coefficients were designed.  It would be valuable to develop a more sophisticated choice of coefficients that ensures small error constants over a wider range of step-size ratios, and we expect that this would yield an even greater efficiency gain for these methods, particularly at tight tolerances.

\begin{figure}
    \centering
    \includegraphics[width=0.45\linewidth]{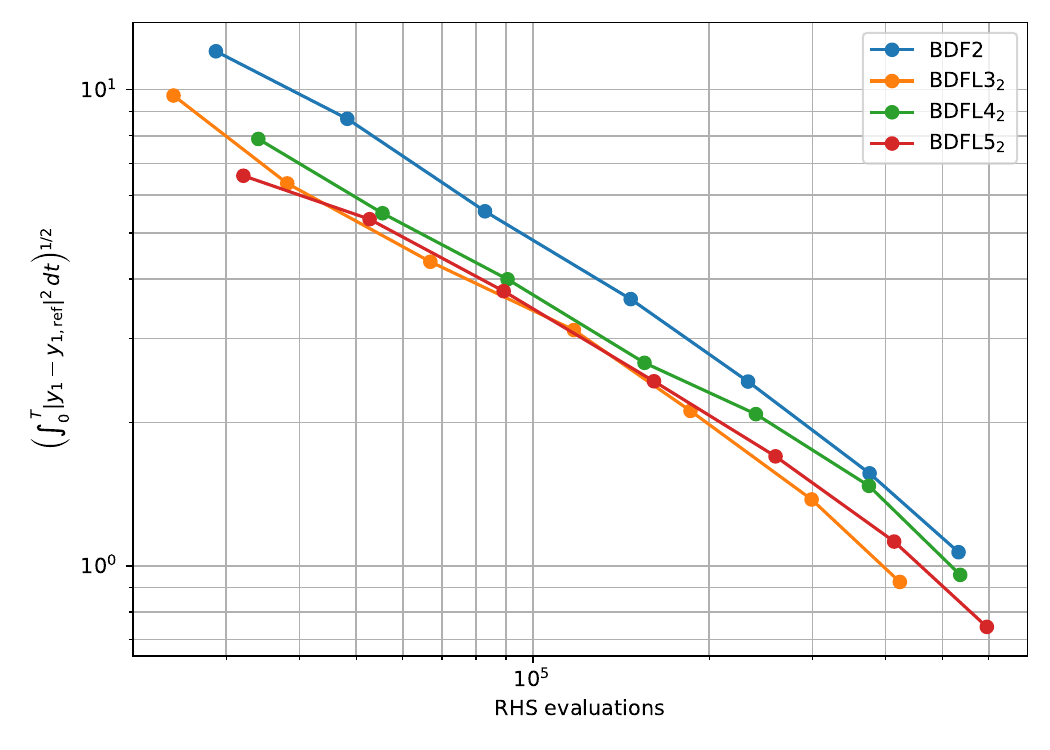}
    \hfill
    \includegraphics[width=0.45\linewidth]{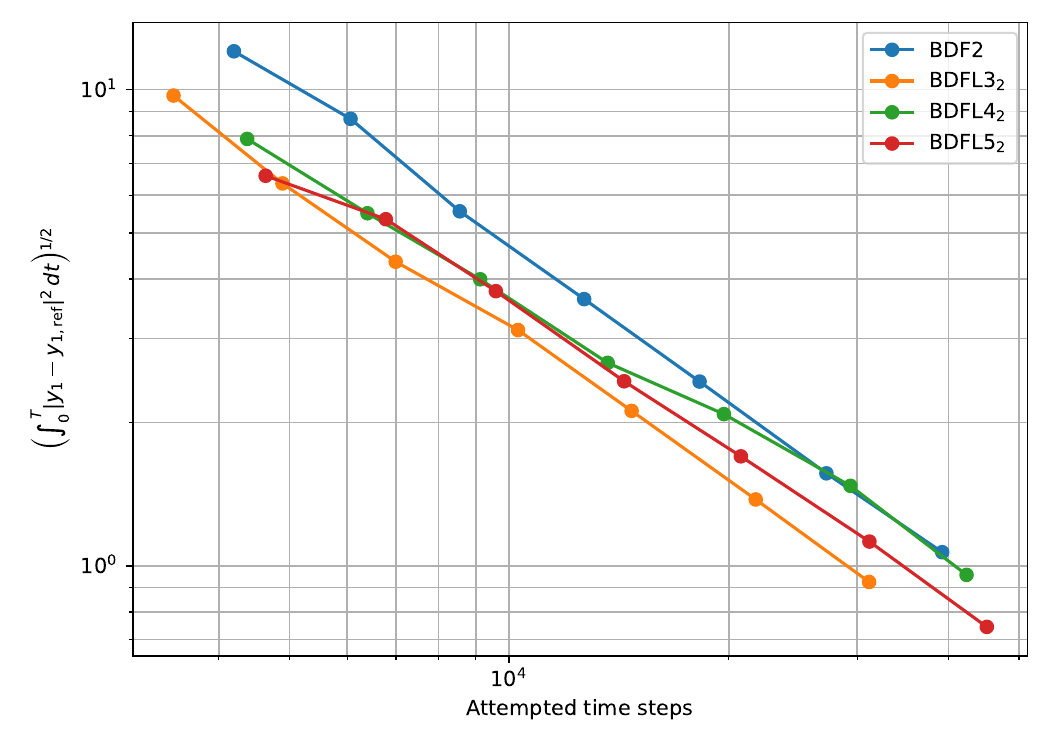} 
    \caption{Work-precision study for VSS methods applied to the van der Pol problem.  Left: number of RHS evaluations. Right: number of attempted steps.}
    \label{fig:workprec}
\end{figure}

\appendix
\section{Variable-step-size coefficients for the optimal BDFL$4_2$ and BDFL$5_2$ methods} \label{varstepbdfl45coeffs}

Below, we present the coefficients of the adaptive versions of
our BDFL$4_2$ and BDFL$5_2$ methods, see Section \ref{sec:adaptiveBDFmethods}.

$\bullet$ For the BDFL$4_2$ method, using the abbreviations
\begin{align}
    D_4 &= 82\,\omega_{n-2}
            \bigl(\omega_{n-1}(\omega_n+1)+1\bigr) + 41
     \notag\\[4pt]
    A_4 &= \bigl(\omega_n(41\omega_n - 44\sqrt{2}+10)
            - 22\sqrt{2}+5\bigr)\omega_{n-1}^2 \notag\\
        &\quad + 2(7+2\sqrt{2})(\omega_n+1)\omega_{n-1}
            + 2\sqrt{2}+7
     \notag\\[4pt]
    B_4 &= \bigl((-41\omega_n+44\sqrt{2}-10)\omega_n
            + 22\sqrt{2}-5\bigr)\omega_{n-1}^2 \notag\\
        &\quad - 2(7+2\sqrt{2})(\omega_{n-2}+1)(\omega_n+1)\omega_{n-1}
            - (7+2\sqrt{2})(\omega_{n-2}+2)
     \notag\\[4pt]
    N_{\beta_4} &=
        \omega_{n-2}\bigl(41\omega_n(\omega_n+2)-22\sqrt{2}+5\bigr)
            \omega_{n-1}^2 \notag\\
        &\quad - (2\omega_{n-2}+1)(-41\omega_n+22\sqrt{2}-5)\,\omega_{n-1}
            + (7+2\sqrt{2})(\omega_{n-2}+1)
     \notag\\[4pt]
    D_{\beta_4} &= 41\,\omega_{n-1}\,\omega_n
        \bigl(2\omega_{n-2}
            \bigl(\omega_{n-1}(\omega_n+1)+1\bigr)+1\bigr),
     \notag
\end{align}
the method coefficients are given by
\begin{align*}
    \alpha_{0,n} &= \frac{\omega_{n-2}^2\, A_4}{D_4}  \\[4pt]
    \alpha_{1,n} &= \frac{\omega_{n-2}(\omega_{n-2}\, B_4) - 2\sqrt{2}-7}{D_4} \\[4pt]
    \alpha_{2,n} &= \tfrac{2}{41}(1+12\sqrt{2}) \\[4pt]
    \alpha_{3,n} &= -\tfrac{2}{41}(18+11\sqrt{2}) \\[4pt]
    \alpha_{4,n} &= 1 \\[4pt]
    \beta_{4,n}  &= \frac{N_{\beta_4}}{D_{\beta_4}}.
\end{align*}

$\bullet$ For the BDFL$5_2$ method, using the abbreviations
\begin{align}
    D_5 &= 88\Bigl(2\omega_{n-3}
            \bigl(\omega_{n-2}
                \bigl(\omega_{n-1}(\omega_n+1)+1\bigr)+1\bigr)+1\Bigr)
     \notag\\[4pt]
    P_5 &= \bigl(2(-22\omega_n+14\sqrt{5}+1)\omega_n
            +14\sqrt{5}+1\bigr)\omega_{n-1}^2 \notag\\
        &\quad + (2-16\sqrt{5})(\omega_n+1)\omega_{n-1}
            - 8\sqrt{5}+1
     \notag\\[4pt]
    Q_5 &= \omega_{n-1}(\omega_n+1)+1
     \notag\\[4pt]
    N_{\beta_5} &=
        2\omega_{n-3}\,\omega_{n-2}^2
            (-44\omega_n(\omega_n+2)+14\sqrt{5}+1)\,\omega_{n-1}^2
            \notag\\
        &\quad + 2\omega_{n-2}(2\omega_{n-3}(\omega_{n-2}+1)+1)
            (-44\omega_n+14\sqrt{5}+1)\,\omega_{n-1} \notag\\
        &\quad + (2-16\sqrt{5})\,\omega_{n-2} \notag\\
        &\quad + \omega_{n-3}\bigl(
            (2-16\sqrt{5})\,\omega_{n-2}(\omega_{n-2}+2)
            -7\sqrt{5}+27\bigr)
            -7\sqrt{5}+27
     \notag\\[4pt]
    D_{\beta_5} &= 88\,\omega_{n-2}\,\omega_{n-1}\,\omega_n
        \Bigl(2\omega_{n-3}
            \bigl(\omega_{n-2}
                \bigl(\omega_{n-1}(\omega_n+1)+1\bigr)+1\bigr)+1\Bigr),
     \notag
\end{align}
the method coefficients are given by
\begin{align*}
    \alpha_{0,n} &= -\frac{
        \omega_{n-3}^2\bigl(
            2P_5\,\omega_{n-2}^2
            - 2(7\sqrt{5}-27)\,Q_5\,\omega_{n-2}
            - 7\sqrt{5}+27
        \bigr)}{D_5} \\[4pt]
    \alpha_{1,n} &= \frac{
        \omega_{n-3}\bigl(
            2\omega_{n-3}\,P_5\,\omega_{n-2}^2
            - 2(7\sqrt{5}-27)(\omega_{n-3}+1)\,Q_5\,\omega_{n-2}
            - (7\sqrt{5}-27)(\omega_{n-3}+2)
        \bigr) - 7\sqrt{5}+27}{D_5} \\[4pt]
    \alpha_{2,n} &= \tfrac{1}{88}(-25-9\sqrt{5}) \\[4pt]
    \alpha_{3,n} &= \tfrac{\sqrt{5}}{2} \\[4pt]
    \alpha_{4,n} &= \tfrac{1}{44}(-45-14\sqrt{5}) \\[4pt]
    \alpha_{5,n} &= 1 \\[4pt]
    \beta_{5,n}  &= -\frac{N_{\beta_5}}{D_{\beta_5}}.
\end{align*}

\section*{Statements and declarations}

\noindent The authors have no competing interests that are relevant to the content of this article.

\subsubsection*{Data availability statement}
The Python and Wolfram \textit{Mathematica} code used to generate the numerical results and figures in this study is available at \url{https://github.com/tylerj32/optimized_lmms_for_stiff_problems_repository/tree/main}.

\subsubsection*{Use of artificial intelligence}

We used \textbf{ChatGPT} and \textbf{Claude Code} to generate  some of the Python code used for error estimation and plotting in Section \ref{subsec:vanderpolvariable} only.  All AI-generated code was subsequently tested and verified by the authors.

\bibliographystyle{plain}
\bibliography{references}

\end{document}